\numberwithin{equation}{section}
\newcommand {\mo} {\,{\rm mod}\,}
\newcommand{\diam}{{\rm diam}\,}
\newcommand{\loc}{{\rm loc}}
\newcommand{\adm}{{\rm adm}}
\newcommand{\SO}{{\rm SO}}
\newcommand {\Sn} {{\overline{\mathbb R}^n}}
\newcommand {\ring} {{\mathcal R}}
\newcommand {\R} {\mathbb R}
\newcommand {\B} {\mathbb B}
\newcommand {\Bno} {{\mathbb B}_0^n}
\newcommand {\pcub} {\overline{\mathbb B}_0^n}
\newcommand{\M}{\mathsf{M}}
\newcommand {\A} {\mathcal A}
\newcommand {\aand} {\quad\text{and}\quad}
\newcommand {\Sph} {{\mathbb{S}^{n-1}}}
\theoremstyle{theorem}
\numberwithin{equation}{section}
\newenvironment{thm}[1][]{%
\refstepcounter{equation}%
\medskip%
\noindent%
\arabic{section}.\arabic{equation} {\bf Theorem}%
\ifthenelse{\equal{#1}{}}{}{ (#1)}%
{\bf .}
\itshape}{\medskip}
\newenvironment{cor}[1][]{%
\refstepcounter{equation}%
\medskip%
\noindent%
\arabic{section}.\arabic{equation} {\bf Corollary}%
\ifthenelse{\equal{#1}{}}{}{ (#1)}%
{\bf .}
\itshape}{\medskip}
\newenvironment{lem}[1][]{%
\refstepcounter{equation}%
\medskip%
\noindent%
\arabic{section}.\arabic{equation} {\bf Lemma}%
\ifthenelse{\equal{#1}{}}{}{ (#1)}%
{\bf .}
\itshape}{\medskip}
\renewcommand{\subsection}[1]%
{\smallskip \noindent \refstepcounter{equation}\arabic{section}.\arabic{equation} {\bf #1.}}
\begin{document}
\title[Cavitation]{Modulus estimates and cavitation in higher dimensions}
\subjclass[2020]{Primary: 30C65; Secondary: 30B75, 31B15}
\keywords{quasiconformal mapping, modulus of a ring, cavitation, directional dilatation}

\author[A. Golberg]{Anatoly Golberg}
\address{School of Mathematical Sciences \\
Holon Institute of Technology \\
52 Golomb St., P.O.B. 305, Holon 5810201, Israel \\
ORCID 0009-0002-8785-7463
}
\email{golberga@hit.ac.il}

\author[V. Gutlyanski\u\i]{Vladimir Gutlyanski\u\i}
\address{NAS of Ukraine \\
Institute of Applied Mathematics and Mechanics \\
1 Dobrovolskogo St., Slavyansk 84100, Ukraine \\
ORCID 0000-0002-8691-4617
}
\email{vgutlyanskii@gmail.com}

\author[V. Ryazanov]{Vladimir Ryazanov}
\address{NAS of Ukraine \\
Institute of Applied Mathematics and Mechanics \\
1 Dobrovolskogo St., Slavyansk 84100, Ukraine \\
ORCID 0000-0002-4503-4939
}
\email{vl.ryazanov1@gmail.com}

\author[T. Sugawa]{Toshiyuki Sugawa}
\address{Graduate School of Information Sciences \\
Tohoku University \\
Aoba-ku, Sendai 980-8579, Japan \\
ORCID 0000-0002-3429-5498
}
\email{sugawa@math.is.tohoku.ac.jp}

\begin{abstract}
We explore the phenomenon of cavitation in higher-dimensional elasticity, defining it as the mapping of a punctured ball onto a non-degenerate ring domain. Crucially, for the class of locally quasiconformal mappings (or more general mappings) defined on the punctured ball $0<|x|<1$ in $\mathbb R^n$ that we examine, cavitation is equivalent to a failure of continuous extension to the origin. While existing modulus estimates prove insufficient for reliably detecting cavitation in this setting, our study establishes refined modulus bounds. This is achieved by introducing a novel directional dilatation which, in conjunction with the known angular dilatation, overcomes the limitations of previous methods. We illustrate our theoretical findings with several examples that demonstrate both cavitation occurrence and its absence.
\end{abstract}
\maketitle



\medskip

\bigskip

\section{Introduction}\label{sec1}

\medskip

Geometric Function Theory, particularly, Mapping Theory, shares profound connections with Elasticity Theory.
When describing the transformation of a solid body $G\subset \mathbb R^n$ by a mapping $f:G\to \mathbb R^n,$
the continuity of $f$ can be interpreted as the absence of breaks or cavities within the transformed body $f(G).$
Conversely, identifying sufficient and necessary conditions for ``cavitation''
— the creation of empty spaces within a solid object — is a topic of significant interest.

Let $f$ be an injective continuous map of the punctured open unit ball
$\mathbb B^n_0:=\mathbb B^n\setminus\{0\}\subset\R^n$ into $\R^n.$
Note that, by Brauwer's invariance of domain theorem, $V=f(\Bno)$ is a domain, and $f$ is a homeomorphism of $\Bno$ onto $V.$
We will assume that $f$ maps the insite of the sphere $S_r=\{|x|=r\}$ into the inside of the
closed surface $f(S_r).$
In this paper, we will say that $f$ admits \textit{cavitation} at the origin if $V$ is a non-degenerate ring,
in other words, the bounded connected component of $\R^n\setminus V$ is a non-degenerate continuum.
Our focus will be on locally quasiconformal mappings (see \cite{GMR23} for plane case).
From a mapping theory perspective, cavitation can be viewed as a failure of boundary extension to the origin,
a notoriously challenging problem in the field.

In higher dimensions, the behavior of a mapping is mainly described in terms of two key quantities -
the outer and inner dilatation coefficients - defined respectively as
\begin{equation*}
K_f(x)\,=\,\frac{\|f'(x)\|^n}{J_f(x)}\,\qquad\text{and}\qquad L_f(x)\,=\,\frac{J_f(x)}{l_f(x)^n}\,.
\end{equation*}
Here,
\begin{equation*}
l_f(x)=\min_{h\in\Sph}|f'(x)h| \qquad\text{and}\qquad
\|f'(x)\|=\max_{h\in\Sph}|f'(x)h|
\end{equation*}
represent the minimal stretching and the maximal stretching, respectively.
Here and hereafter, $\Sph$ denotes the unit sphere of $\R^n.$
We recall that a homeomorphism $f$ of a domain $G$ in $\R^n$ onto another $G'\subset\R^n$ is called
$K$-quasiconformal if $f\in {\rm ACL}(G)$ and $K_f(x)\le K$ for a.e.\,$x\in G.$
See \cite{Vai71} or \cite{GMP17} for details.
A homeomorphism $f:G\to G'$ is simply called quasiconformal if it is $K$-quasiconformal for some $K<+\infty.$
It is well known that quasiconformal homeomorphisms do not admit cavitation
(see, for instance, \cite[Thm 17.3]{Vai71}).
In order to observe cavitation phenomena,
we will consider locally quasiconformal embeddings of the punctured closed unit ball
$\pcub$ into $\R^n.$
Here and hereafter, we will say that $f$ is a \emph{locally quasiconformal embedding}
of $\pcub$ into $\R^n$ if $f$ extends to a domain $G \,(\not\ni 0)$ containing $\pcub$ in such a way that
$f$ is quasiconformal on $\{x\in G: |x|>\varepsilon\}$ for each $\varepsilon\in(0,1).$
Note that $f\in W^{1,n}_\loc(G)$ in this case.
Throughout the paper, we  will assume that $f$ preserves the insides of rings.
More precisely, letting $S_r=\{x: |x|=r\},$ for $0<r_1<r_2<1,$ the surface $f(S_{r_1})$ is
contained in the inside of the surface $f(S_{r_2}).$

To illustrate the concept of cavitation and its absence, consider the following
two locally quasiconformal embeddings of $\pcub.$

The radial stretching
\begin{equation*}
f_1(x)\,=\,\frac{1+|x|^\alpha}{2|x|}x\,,\qquad 0<\alpha<1\,,
\end{equation*}
maps $\mathbb B^n_0$ onto the spherical annulus $\mathcal A(\frac{1}{2},1)$ clearly indicating cavitation at the origin.

Conversely, another stretching of $\mathbb B^n_0$
\begin{equation*}
f_2(x)\,=\,xe^{1-1/|x|}
\end{equation*}
does not exhibit cavitation at the origin and can be continuously extended to 0 by setting $f_2(0)=0.$

Our investigation into cavitation relies on bounds for the modulus of families of curves (or, in an equivalent form, in terms of the modulus of ring domains). This approach has proven highly successful in the planar case, tracing back to the seminal work of Gr\"otzsch \cite{Gro28}. While such estimates exist in higher dimensions for quasiconformal mappings (an upper bound) \cite{BGMV03} and mappings of finite area distortion (both lower and upper ones) \cite{KR08, MRSY09}, we demonstrate in Section~\ref{sec4} that these existing bounds are often insufficient to definitively identify cavitation or non-cavitation.

To address this limitation, we introduce finer bounds that incorporate two directional dilatations. In the plane, various directional dilatations have been indispensable tools for analyzing mapping features (e.g., \cite{AC71_1, BJ94, Gol18,  GMSV05, GS01, RW65, RSY05}). To the best of our knowledge, the first multidimensional directional dilatation was introduced in \cite{AC71_2}. Our prior work \cite{GG09} extended the angular dilatation $D_\mu(z,z_0)$ from $\mathbb C$ to $\mathbb R^n;$ cf. \cite{GG09TM}. In \cite{Gol10, GSV25}, we also utilized the normal directional dilatation in modulus bounds.

In this paper, we introduce a novel directional dilatation, $Q_f(x,x_0),$
which in the planar case reduces to the radial dilatation $D_{-\mu}(z,z_0).$
This new quantity plays a crucial role in deriving lower estimates for the modulus of families of curves. Furthermore, our use of the modulus of families of curves allows for a relaxation of absolute continuity assumptions compared to some previous works (cf. \cite{GSV25}). The main results of this paper establish several necessary and sufficient conditions for the occurrence of cavitation (and non-cavitation), which are illustrated by the mappings $f_1$ and $f_2$ and other relevant examples.
We also show that this approach allows us to immediately derive some previously known bounds, whose original proofs relied on intricate and non-trivial techniques.


\section{Main notions and modulus estimates}\label{sec2}

\subsection{Modulus of family of curves}
Let $\Gamma$ be a family of curves in $\mathbb R^n.$
A Borel measurable function $\varrho$ on $\mathbb R^n$ is called {\it admissible for $\Gamma$}
if $0\le \varrho<+\infty$ a.e. on $\R^n$ and if
$\int_\gamma\varrho(x)|dx|\ge1$ whenever $\gamma\in\Gamma$ is locally rectifiable.
We denote by $\adm(\Gamma)$ the set of admissible functions $\varrho$ for $\Gamma.$

The (conformal) modulus of $\Gamma$ is defined to be
\begin{equation*}
\M(\Gamma)\,=\,\inf_{\varrho\in\adm(\Gamma)} \int\limits_{\R^n}\varrho(x)^n\,dm_n(x)\,,
\end{equation*}
where $m_n$ stands for the Lebesgue measure on $\mathbb R^n.$
If a property holds for all $\gamma\in\Gamma\setminus \Gamma_0$ for a subfamily $\Gamma_0$
of $\Gamma$ with $\M(\Gamma_0)=0,$ we will say that the property holds for almost every $\gamma\in\Gamma.$


\subsection{Rings}
Throughout this paper, a continuum will mean a connected, compact and non-empty set.
A continuum is said to be non-degenerate if it contains more than one point.
A continuum $C\subsetneq\Sn$ is called {\it filled} if $\Sn\setminus C$ is connected.
For a pair of disjoint filled continua $C_0$ and $C_1$ in $\Sn,$
the set $\ring=\Sn\setminus(C_0\cup C_1)$ is open and connected and
will be called a ring domain or, simply, a {\it ring} and sometimes denoted by $\ring(C_0, C_1).$
The ring $\ring$ is said to have non-degenerate boundary if each component $C_j$
is a non-degenerate continuum.
We will say that a ring $\ring=\ring(C_0,C_1)$ separates a set $E$
if $\ring\cap E=\varnothing$ and if
$C_j\cap E\ne\varnothing$ for $j=0,1.$
For non-empty sets $E_0, E_1,$
$\ring$ is said to separate $E_0$ from $E_1$ if $E_j\subset C_j$ for $j=0,1.$
In the sequel, when $\ring\subset\R^n,$
we will assume conventionally that $\infty\in C_1$ unless otherwise stated.

Let $\Gamma_\mathcal R$ be the family of all curves joining $C_0$ and $C_1$ in $\mathcal R.$
Then the modulus (called also the module) of  $\mathcal R$ is defined by
\begin{equation*}
\mo \mathcal R\,=\,\left[\frac{\omega_{n-1}}{\M(\Gamma_\mathcal R)}\right]^{1/(n-1)}
\,,
\end{equation*}
where $\omega_{n-1}$ denotes the area of the $(n-1)$-dimensional unit sphere \cite[p.~IX]{Vai71}.

For the spherical ring $\A(r,R)=\{x\in\mathbb R^n\,:\,r<|x-a|<R\}$ centered at a point $a,$
we have $\mo \A(r,R)=\log(R/r)$ (see, e.g. \cite[pp. 22--23]{Vai71}).

A ring $\ring'$ is said to be a {\it subring} of a ring $\ring$ if $\ring'\subset\ring$ and if
$\ring'$ separates $\Sn\setminus\ring.$
By the monotonicity of the moduli of curve families, we have the inequality $\mo \ring'\le \mo\ring$
in this case (the Comparison Principle).

In the plane case, thanks to the uniformization theorem, most domains admit
the hyperbolic metric, which is conformally invariant and offers a very powerful tool
in Geometric Function Theory.
However, in higher dimensions, the lack of the hyperbolic metric
necessitates the use of spherical subrings, whose moduli can be explicitly calculated,
unlike that of arbitrary subrings. The sharp result for the existence of spherical
subrings (Teichm\"uller's theorem on separating rings) has been established in \cite{GSV20}
and recently applied in \cite{GSV25} to the boundary correspondence problems.
For more on the modulus technique, see also \cite{MRSY09}
and \cite{Sug10}.


\subsection{Multidimensional Teichm\"uller Theorem}
Following \cite{GSV20}, we recall the quantity $A_n$ defined by
\begin{equation}\label{eq:An}
A_n\,=\,\sup_{1<t<+\infty}\left[\mo \ring_{T,n}(t)-\log t\right]\,,
\end{equation}
where $\ring_{T,n}(t)=\mathcal R([-e_1,0], [te_1,\infty]),$ $t>0,$ stands for the Teichm\"uller ring.
It is known that $A_2=\pi$ and $A_n\le 2\log(1+\sqrt2)+\frac2{n-1}\log 2+\frac{2n(n-2)}{n-1}$
for $n\ge 2.$

The following theorem \cite{GSV20, GSV25} can be regarded as a Teichm\"uller-type theorem for higher dimensions.

\begin{thm}\label{thm:Teich} 
Let $n\ge 2.$
Every ring domain $\mathcal R$ separating a given point $x_0$ in $\mathbb R^n$ from $\infty$
with $\mo \mathcal R>A_n$ contains a spherical ring $\mathcal A$ centered at $x_0$
such that $\mo \A\ge \mo \ring - A_n.$
The constant $A_n$ is sharp.
\end{thm}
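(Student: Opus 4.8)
We sketch a proof. After a translation we may assume $x_0=0$, so that $\mathcal R=\mathcal R(C_0,C_1)$ with $0\in C_0$ and $\infty\in C_1$; we may also assume that both $C_0,C_1$ are nondegenerate, since otherwise $\mo\mathcal R=+\infty$ and concentric spherical rings of arbitrarily large modulus already lie in $\mathcal R$. Put $a=\max\{|x|:x\in C_0\}\in(0,\infty)$ and $b=\inf\{|x|:x\in C_1\setminus\{\infty\}\}\in(0,\infty)$. Since $C_0$ is a continuum containing $0$ and meeting $S_a$, the map $x\mapsto|x|$ carries $C_0$ onto $[0,a]$, so $C_0$ meets $S_\rho$ for every $\rho\in[0,a]$; likewise $C_1$ meets $S_\rho$ for every $\rho\ge b$. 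In particular $C_0\subset\{|x|\le a\}$ and $C_1\subset\{|x|\ge b\}\cup\{\infty\}$, so $\mathcal A=\{x:a<|x|<b\}$ (empty unless $a<b$) is the largest spherical ring centered at $0$ that can lie in $\mathcal R$.

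The core of the argument is the estimate $\mo\mathcal R\le\mo\mathcal R_{T,n}(b/a)$, which I would obtain by spherical symmetrization. Symmetrize $C_0$ about the ray $\{-te_1:t\ge0\}$ and $C_1$ about the opposite ray $\{te_1:t\ge0\}$, producing continua $C_0^*$ and $C_1^*$; by Gehring's symmetrization theorem for ring domains, $\mathcal R^*:=\mathcal R(C_0^*,C_1^*)$ is again a ring and $\mo\mathcal R^*\ge\mo\mathcal R$ (spherical symmetrization does not increase the capacity of a ring). Since $C_0$ meets $S_\rho$ for every $\rho\in[0,a]$, its symmetrization $C_0^*$ contains the pole $-\rho e_1$ for each such $\rho$, hence $[-ae_1,0]\subset C_0^*$, and similarly $[be_1,\infty]\subset C_1^*$. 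Therefore $\mathcal R^*\subset\mathcal R([-ae_1,0],[be_1,\infty])$, and as $\mathcal R^*$ separates the complement of that ring, the Comparison Principle gives $\mo\mathcal R^*\le\mo\mathcal R([-ae_1,0],[be_1,\infty])$. Finally the conformal dilation $x\mapsto x/a$ carries $\mathcal R([-ae_1,0],[be_1,\infty])$ onto $\mathcal R([-e_1,0],[(b/a)e_1,\infty])=\mathcal R_{T,n}(b/a)$, which yields the estimate.

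Now bring in the hypothesis $\mo\mathcal R>A_n$. Passing to subrings shows that $t\mapsto\mo\mathcal R_{T,n}(t)$ is nondecreasing, while \eqref{eq:An} gives $\mo\mathcal R_{T,n}(t)\le\log t+A_n$ for $t>1$; letting $t\to1^+$ yields $\mo\mathcal R_{T,n}(1)\le A_n$. Hence $b/a>1$, for otherwise $\mo\mathcal R\le\mo\mathcal R_{T,n}(b/a)\le\mo\mathcal R_{T,n}(1)\le A_n$, a contradiction. With $b/a>1$, the definition of $A_n$ gives $\mo\mathcal R\le\mo\mathcal R_{T,n}(b/a)\le\log(b/a)+A_n$, that is, $\log(b/a)\ge\mo\mathcal R-A_n$. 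The spherical ring $\mathcal A=\{x:a<|x|<b\}$ is centered at the origin (i.e.\ at $x_0$), is disjoint from and separates $C_0\cup C_1$, hence is a subring of $\mathcal R$, and $\mo\mathcal A=\log(b/a)\ge\mo\mathcal R-A_n$, as required. For sharpness, take $x_0=0$ and $\mathcal R=\mathcal R_{T,n}(t)$ with $t>1$: a spherical ring centered at $0$ lies in $\mathcal R_{T,n}(t)$ precisely when it is $\mathcal A(r,R)=\{r<|x|<R\}$ with $1\le r<R\le t$, so the largest has modulus $\log t$, and $\mo\mathcal R_{T,n}(t)-\log t$ has supremum exactly $A_n$ over $t>1$; thus $A_n$ cannot be decreased.

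The step I expect to be the genuine obstacle is the symmetrization estimate $\mo\mathcal R\le\mo\mathcal R_{T,n}(b/a)$, since this is precisely what forces the unavoidable loss to be controlled by the Teichm\"uller ring, and hence by $A_n$. The remaining ingredients — reduction to $x_0=0$, the identification of $a$ and $b$, and the passages between sub- and super-rings — are routine monotonicity of the modulus together with the defining property \eqref{eq:An} of $A_n$.
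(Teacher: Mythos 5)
The paper does not actually prove Theorem~\ref{thm:Teich}; it is quoted from \cite{GSV20, GSV25}. Your argument is correct and is essentially the route taken in those references: Gehring's spherical-symmetrization theorem yields the Teichm\"uller-ring extremality $\mo\mathcal R\le\mo\mathcal R_{T,n}(b/a)$, and this combined with the definition \eqref{eq:An} of $A_n$ and the monotonicity of $t\mapsto\mo\mathcal R_{T,n}(t)$ gives both $a<b$ and the required lower bound for $\log(b/a)$. Regarding the step you flag as the potential obstacle: the symmetrized continua $C_0^*$ and $C_1^*$ are automatically disjoint, because for every $\rho>0$ the connected sphere $S_\rho$ cannot be covered by the two disjoint closed sets $C_0\cap S_\rho$ and $C_1\cap S_\rho$; hence the surface measures of these traces sum to strictly less than that of $S_\rho$, and the two opposite polar caps always leave a gap. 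Thus $\mathcal R^*$ is a genuine ring domain even in the a priori possible case $a\ge b$, so the contradiction argument establishing $a<b$ is sound as written, and the rest of the proof (including the sharpness via $\mathcal R_{T,n}(t)$ with $t$ near the supremizing value in \eqref{eq:An}) goes through.
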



\subsection{Directional dilatations}\label{Direc}
Using the directional derivative of $f$ in a direction $h, h\ne 0$, at $x$,
given by
\begin{equation*}
\partial_h f(x)\,=\,\lim\limits_{t\to 0^+}\frac{f(x+th)-f(x)}{t}\,,
\end{equation*}
whenever the limit exists, we define the angular dilatation as follows.
Obviously, $\partial_h f(x)=f'(x)h,$ if $f$ is differentiable at $x,$
where $f'(x)$ denotes the Jacobi matrix of $f$ at $x.$
Throughout this paper, a point $x=(x_1,\dots,x_n)\in\R^n$ is often
treated as a $1\times n$ matrix; namely, a column vector, in matrix computations.

Let $f$ be an orientation-preserving homeomorphism of a domain $G$
in $\R^n$ onto another domain $G'$ in $\R^n$
and let $x\in G$ be a regular point of $f.$
For a point $x_0\in\mathbb R^n$, we define the \textit{angular dilatation}
of the mapping $f$ at $x\in G,$ $x\ne x_0,$ \textit{with respect to} $x_0,$
introduced in \cite{GG09} by
\begin{equation*}
D_f(x,x_0)\,=\,\frac{J_f(x)}{\ell_f(x,x_0)^n}\,,
\qquad\ell_f(x,x_0)\,=\,\min\limits_{h\in\Sph}\frac{|\partial_h f(x)|}{|h\cdot u|}\,,
\end{equation*}
where 
$u=(x-x_0)/|x-x_0|$ stands for the unit radial vector.
The dual dilatation
\begin{equation*}
T_f(x,x_0)=\biggl[\frac{\mathcal L_f(x,x_0)^n}{J_f(x)}\biggr]^{1/(n-1)}, \qquad
\mathcal L_f(x,x_0)=\max\limits_{h\in\Sph}\bigl(|\partial_h f(x)||h \cdot u|\bigr),
\end{equation*}
was introduced in \cite{Gol10} (see also \cite{GSV25}).

Note that for the planar case the angular dilatation is represented by
\begin{equation*}
D_f(z,z_0)\,=\,D_{\mu}(z,z_0)\,=\,
\frac{\left|1-\mu(z)\frac{z-z_0}{\bar z- \bar z_0}\right|^2}{1-|\mu(z)|^2}\,,
\end{equation*}
where $\mu$ is the complex dilatation $f_{\bar z}/f_z$ of $f$ (see \cite[Lemma 2.10]{RSY05}).

The quantity $T_f(x,x_0)$ was used to bound the modulus of a ring from above
(see \cite[Lemma 3]{GSV25}) but the estimate relies on the moduls of
the family of hypersurfaces separating the two boundary components of the ring.
To this end, we needed to assume several additional properties of the mapping.
Moreover, the quantity $T_f(x,x_0)$ is not easy to compute.
In this paper, we propose a new directional dilatation as a multidimensional
counterpart of the planar radial dilatation $D_{-\mu}(z,z_0)$ by
\begin{equation*}
Q_f(x,x_0)\,=\,\left(\frac{|\partial_u f(x)|^n}{J_f(x)}\right)^{1/(n-1)}\,,
\quad
u=\frac{x-x_0}{|x-x_0|}.
\end{equation*}
The quantity $Q_f(x,x_0)$ will be called the {\it normal dilatation} of $f$
at $x\in G$ with respect to $x_0\in\R^n.$
Since $l_f(x,x_0)\le\ell_f(x,x_0)\le|\partial_u f(x)|\le \mathcal L_f(x,x_0)\le \|f'(x)\|,$
at a regular point $x$ of $f,$ the chain of inequalities
\begin{equation}\label{eq:Qf}
\begin{split}
\frac1{K_f(x)}&\le \frac1{L_f(x)^{1/(n-1)}}\le\frac1{D_f(x,x_0)^{1/(n-1)}} \\
&\le Q_f(x,x_0)\le T_f(x,x_0) \le K_f(x)^{1/(n-1)}\le L_f(x)
\end{split}
\end{equation}
hold.
In particular, $\log D_f(x,x_0)$ and $\log Q_f(x,x_0)$ are locally (essentially) bounded
on the domain of definition of a locally quasiconformal map $f.$

Note that both the angular and normal dilatations may range from
0 to $\infty,$ unlike classical dilatations which are always
greater than or equal to 1.
Clearly, $L_f(x)=K_f(x)\equiv 1$ for conformal mappings $f,$ and therefore,
both directional dilatations also are equal to 1, but not vice versa.
A ``quick rotation'' of $\mathbb B^n_0$ defined as
\begin{equation*}
f_3(x)=\left(ze^{2i\log|z|}, x_3,...,x_n\right)\,,\qquad
z=x_1+ix_2\,,\quad 0<|x|<1\,,
\end{equation*}
provides an example where $D_{f_3}(x,0)\equiv 1$ for all $x\in\mathbb B^n_0.$
However, $K_{f_3}(x)=L_{f_3}(x)=(1+\sqrt{2})^n>1$ at all points of $\mathbb B^n_0;$ see \cite{GG09}.
By a direct calculation, one has $Q_{f_3}(x,0)=\left(1+4|z|^2\right)^{n/2(n-1)}.$

Observe also that these directional dilatations provide a reasonable kind of flexibility in various estimates,
although their concrete evaluations are much more complicated than those of classical ones; see, e.g., \cite{Gol18}.
In what follows, we will discuss the cavitation problems only at the origin.
Therefore, we will write for brevity $D_f(x)=D_f(x,0)$ and $Q_f(x)=Q_f(x,0).$
Note that both $D_f(x)$ and $Q_f(x)$ are Borel measurable on the set
of regular points of $f$ in $G.$
Also, we note that these quantities are rotationally invariant.
We will formulate it in an explicit way.
Let $A\in\SO(n);$ that is, $A$ is an orientation-preserving linear isometry with respect to the Euclidean metric
and let $\tilde f=A\circ f\circ A^{-1}.$
Then obviously $D_{\tilde f}(Ax)=D_f(x), T_{\tilde f}(Ax)=T_f(x)$ and $Q_{\tilde f}(Ax)=Q_f(x)$ at
a regular point $x$ of $f.$


\subsection{Radial stretching}
Let $\Phi(t)$ be a strictly increasing continuous function on $r<t<R$ for some
$0\le r<R\le +\infty.$
Then the mapping
\begin{equation*}
f(x)\,=\,\frac{\Phi(|x|)}{|x|}\cdot x\,=\,\Phi(t) u\,, ~ \text{where}~  t\,=\,|x| ~\text{and}~ u\,=\,x/|x|\,,
\end{equation*}
is a homeomorphism of the domain $\A(r,R)=\{x\in\R^n: r<|x|<R\}$ onto the annulus
$\A(\hat r,\hat R),$ where $\hat r=\lim_{t\to r}\Phi(t)$ and $\hat R=\lim_{t\to R}\Phi(t).$
Such a mapping $f$ is called a radial stretching on $r<|x|<R.$
Let us compute $D_f(x), Q_f(x)$ and $T_f(x)=T_f(x,0).$

\begin{lem}\label{lem:rs}
Assume that $\Phi(t)$ is absolutely continuous, and let $f$ be
the radial stretching defined by $f(x)=\Phi(|x|)x/|x|=\Phi(t)u.$
Then
\begin{equation*}
D_f(x)\,=\,\left(\frac{t\Phi'(t)}{\Phi(t)}\right)^{1-n}\,,\qquad
Q_f(x)\,=\,\frac{t\Phi'(t)}{\Phi(t)}=:\mathcal Q\,,
\end{equation*}
and
\begin{equation*}
T_f(x)=
\begin{cases}
\mathcal Q & ~\text{if}~ \mathcal Q\ge 1/\sqrt 2, \\
\big(2\sqrt{1-\mathcal Q^2}\big)^{n/(1-n)}\mathcal Q^{1/(1-n)} & ~\text{if}~ \mathcal Q<1/\sqrt 2
\end{cases}
\end{equation*}
for $r<t=|x|<R.$
\end{lem}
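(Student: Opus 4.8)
The plan is to compute the differential $f'(x)$ of the radial stretching explicitly in an orthonormal frame adapted to the radial direction, and then read off the directional dilatations from the resulting diagonal (or block-diagonal) form. First I would fix a point $x$ with $t=|x|$ and $u=x/|x|$, and choose an orthonormal basis $u=e_1', e_2',\dots,e_n'$ of $\R^n$ at $x$ with $e_1'$ pointing in the radial direction. Since $f(x)=\Phi(t)u$ and $\Phi$ is absolutely continuous (hence differentiable a.e.), a direct computation of partial derivatives shows that $f'(x)$ maps the radial direction $u$ to $\Phi'(t)u$ and each tangential direction $e_j'$ ($j\ge 2$) to $(\Phi(t)/t)e_j'$. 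Thus in this frame $f'(x)$ is diagonal with one eigenvalue $\Phi'(t)$ (radial) and $n-1$ eigenvalues $\Phi(t)/t$ (tangential). This gives $J_f(x)=\Phi'(t)\,(\Phi(t)/t)^{n-1}$, $|\partial_u f(x)|=\Phi'(t)$, and for a unit vector $h=h_1 u+\sum_{j\ge2}h_j e_j'$ we get $|\partial_h f(x)|^2=\Phi'(t)^2 h_1^2+(\Phi(t)/t)^2(1-h_1^2)$ while $|h\cdot u|=|h_1|$.

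From these formulas the first two assertions are immediate. For $Q_f(x)$ we have
\begin{equation*}
Q_f(x)=\left(\frac{|\partial_u f(x)|^n}{J_f(x)}\right)^{1/(n-1)}
=\left(\frac{\Phi'(t)^n}{\Phi'(t)(\Phi(t)/t)^{n-1}}\right)^{1/(n-1)}
=\frac{t\Phi'(t)}{\Phi(t)}=\mathcal Q\,,
\end{equation*}
and since $\ell_f(x,0)=\min_{h}|\partial_h f(x)|/|h\cdot u|=\min_{h_1\ne0}\sqrt{\Phi'(t)^2+(\Phi(t)/t)^2(h_1^{-2}-1)}=\Phi'(t)$ (the minimum over $|h_1|\le1$ is attained at $|h_1|=1$), we obtain $D_f(x)=J_f(x)/\ell_f(x,0)^n=(\Phi(t)/t)^{n-1}/\Phi'(t)^{n-1}=\mathcal Q^{1-n}$.

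The only real work is the formula for $T_f(x)$, which is where I expect the main obstacle to lie. Here one must maximize $g(h_1):=|\partial_h f(x)|^2 h_1^2=\bigl(\Phi'(t)^2 h_1^2+(\Phi(t)/t)^2(1-h_1^2)\bigr)h_1^2$ over $h_1\in[-1,1]$ to find $\mathcal L_f(x,0)=\max_h |\partial_h f(x)||h\cdot u|$. Writing $s=h_1^2\in[0,1]$, $a=\Phi'(t)^2$, $b=(\Phi(t)/t)^2$, this is maximizing $(as+b(1-s))s=(a-b)s^2+bs$ over $s\in[0,1]$. If $a\ge b$ (equivalently $\mathcal Q\ge1$) the maximum is at $s=1$, giving $\mathcal L_f=\Phi'(t)$, hence $T_f=\mathcal Q$. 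If $a<b$, the parabola opens downward with vertex at $s^*=b/\bigl(2(b-a)\bigr)$; one checks $s^*\ge1$ exactly when $a\ge b/2$, i.e. $\mathcal Q\ge1/\sqrt2$, in which case the constrained maximum is again at $s=1$ and $T_f=\mathcal Q$; whereas for $\mathcal Q<1/\sqrt2$ the interior critical point $s^*<1$ is active, and substituting gives $\mathcal L_f^2=(as^*+b(1-s^*))s^*=b^2/\bigl(4(b-a)\bigr)$, so $\mathcal L_f=b/\bigl(2\sqrt{b-a}\bigr)=(\Phi(t)/t)^2/\bigl(2\sqrt{(\Phi(t)/t)^2-\Phi'(t)^2}\bigr)$. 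Plugging this into $T_f(x)=\bigl(\mathcal L_f^n/J_f\bigr)^{1/(n-1)}$ and simplifying using $J_f=\Phi'(t)(\Phi(t)/t)^{n-1}$ and $\mathcal Q=t\Phi'(t)/\Phi(t)$ yields, after factoring out powers of $\Phi(t)/t$, the stated expression $\bigl(2\sqrt{1-\mathcal Q^2}\bigr)^{n/(1-n)}\mathcal Q^{1/(1-n)}$. The delicate points are the case analysis at the threshold $\mathcal Q=1/\sqrt2$ (checking the boundary-versus-interior maximum carefully, including that the two formulas agree at $\mathcal Q=1/\sqrt2$, where both equal $2^{n/(2(1-n))}\cdot 2^{-1/(2(1-n))}\cdot\ldots=\mathcal Q$) and the bookkeeping of exponents in the final algebraic simplification.
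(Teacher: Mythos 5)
Your proposal is correct and follows essentially the same route as the paper: reduce to a frame adapted to the radial direction (the paper does this via rotational invariance and the special point $x=te_1$, which is an equivalent device), read off the diagonal form of $f'(x)$ with eigenvalues $\Phi'(t)$ and $\Phi(t)/t$, and then maximize the same quadratic $F(\tau)=(a-b)\tau^2+b\tau$ with $\tau=h_1^2$ to obtain $\mathcal L_f$ and hence $T_f$. Your case split ($\mathcal Q\ge 1$, then $1/\sqrt2\le\mathcal Q<1$, then $\mathcal Q<1/\sqrt2$) collapses to the paper's single criterion $2\Phi'(t)^2\gtrless\phi(t)^2$, and the final algebra you sketch for $T_f$ matches the paper's identity $T_f^{\,n-1}=1/\bigl[\mathcal Q\,(2\sqrt{1-\mathcal Q^2})^n\bigr]$.
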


\begin{proof}
By the rotational invariance of the quantities, it is enough to show it in the case
when $x=te_1=(t,0,\dots,0).$
Then, for any $t\in(r,R)$ at which $\Phi(t)$ has a positive derivative,
a straightforward computation yields
\begin{equation*}
f'(te_1)\,=\,\begin{pmatrix}
\Phi'(t) & 0 & \cdots & 0 \\
0 & \phi(t) & \cdots & 0 \\
\vdots &\vdots & \ddots &\vdots \\
0 & 0 & \cdots & \phi(t)
\end{pmatrix},
\end{equation*}
where $\phi(t)=\Phi(t)/t.$
In particular, we have $J_f(te_1)=\Phi'(t)\phi(t)^{n-1}.$
For $h=(h_1,\dots,h_n)\in\Sph,$ we have
\begin{equation*}
\frac{|\partial_h f(x)|^2}{|h\cdot e_1|^2}\,=\,
\Phi'(t)^2+\phi(t)^2\cdot\frac{h_2^2+\cdots+h_n^2}{h_1^2}\,\ge\, \Phi'(t)^2\,,
\end{equation*}
where equality holds when $h=e_1.$
Therefore, $\ell_f(te_1)=\Phi'(t)=|\partial_{e_1}(te_1)|$
and $D_f(te_1)=[\phi(t)/\Phi'(t)]^{n-1}=[\Phi(t)/t\Phi'(t)]^{n-1}$ and
$Q_f(te_1)=D_f(te_1)^{1/(1-n)}=t\Phi'(t)/\Phi(t).$
On the other hand,
\begin{align*}
\big(|\partial_h f(x)||h \cdot e_1|\big)^2&=\,\big(\Phi'(t)^2h_1^2+\phi(t)^2h_2^2+\cdots
+\phi(t)^2h_n^2\big)h_1^2 \\
&=\,[\Phi'(t)^2-\phi(t)^2]\tau^2+\phi(t)^2 \tau=F(\tau)\,,
\end{align*}
where $\tau=h_1^2\in[0,1].$
It is elementary to check that $F(\tau)$ is maximized at $\tau=1$
if $2\Phi'(t)^2\ge \phi(t)^2;$ otherwise $F(\tau)$ is maximized at
$\tau=\tau_0:=\phi(t)^2/2(\phi(t)^2-\Phi'(t)^2)=1/2(1-\mathcal Q^2).$
In the former case, $\mathcal L_f(te_1)=|\partial_{e_1}f(te_1)|=\Phi'(t).$
Hence $T_f(te_1)=Q_f(te_1)=t\Phi'(t)/\Phi(t)$ if $t\Phi'(t)/\Phi(t)\ge 1/\sqrt 2.$
In the latter case, $\mathcal L_f(te_1)=F(\tau_0)^{1/2}=\phi(t)/2\sqrt{1-\mathcal Q^2}.$
Hence, $T_f(te_1)^{n-1}=\mathcal L_f(te_1)^n/[\Phi'(t)\phi(t)^{n-1}]
=1/[\mathcal Q(2\sqrt{1-\mathcal Q^2})^n]$ and the required formula follows in this case, too.
\end{proof}

We can also show easily that $K_f=\max\{\mathcal Q^{n-1},\mathcal Q^{-1}\}$
and $L_f=\max\{\mathcal Q^{1-n},\mathcal Q\};$ cf. \cite{KG91}.


\subsection{Main modulus estimates}
Our main tool for studying cavitation occurrence is the following double bound for the modulus of the family
$\Gamma=\Gamma_{\mathcal A(r,R)}$ of curves which join the boundary components of the spherical ring
$\mathcal A(r,R)$ in it.
For a recent result for semirings and homeomorphisms of Sobolev class $W^{1,n-1},$ we refer to \cite[Lemma~3]{GSV25}.
Although the upper estimate has been established in \cite[Lemma~2.4]{GG09}, we provide its proof together with the newly established lower bound.
This should help the reader to avoid additional efforts in understanding and aligning notations.



\medskip
\begin{thm}\label{thm:main} 
Let $f:\mathbb B_0^n\to\mathbb R^n$ be a locally quasiconformal homeomorphism.
Then  for any nonnegative measurable functions $\rho(t),$ $t\in (r,R),$ and  $p(u),$ $u\in \mathbb S^{n-1},$
such that
\begin{equation}\label{eq:admcons}
\int\limits_r^R \rho(t)dt\,=\,1\,,\qquad\int\limits_{\mathbb S^{n-1}}p(u)^{n-1}d\sigma(u)\,=\,1\,,
\end{equation}
the following double inequality holds:
\begin{equation}\label{eq:maindoubest}
\left(\int\limits_{\,\mathcal A(r,R)}p\left(\frac{x}{|x|}\right)^n Q_f(x)\frac{dm_n(x)}{|x|^n}\right)^{1-n}\,\le\,
{\M}(f(\Gamma))\,\le\, \int\limits_{\mathcal A(r,R)}\rho^n(|x|)D_f(x)\,dm_n(x),
\end{equation}
where $\Gamma=\Gamma_{\mathcal A(r,R)}$ and
$d\sigma$ denotes the $(n-1)$-dimensional area element of the unit sphere
$\mathbb S^{n-1}$ in $\mathbb R^n.$
\end{thm}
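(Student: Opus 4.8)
The plan is to prove the two inequalities separately, each by exhibiting an explicit admissible (or, for the lower bound, a ``test'') density for an auxiliary curve family and then transporting it through $f$ using a change-of-variables argument. Throughout, I would work with the curve family $\Gamma = \Gamma_{\mathcal A(r,R)}$ in the spherical ring and its image $f(\Gamma)$, and I would use that $f$ is locally quasiconformal, hence $f \in W^{1,n}_{\loc}$, differentiable a.e., satisfies Lusin's condition $(N)$ and $(N^{-1})$ on compact subsets of $\mathbb B_0^n$, and that the change-of-variables formula with Jacobian holds on $\mathcal A(r,R)$. A useful normalization: since $\mo \mathcal R$ and $\M$ are conformally invariant, and every radial segment and every spherical cap through a point of $\mathcal A(r,R)$ can be handled by rotational invariance of $D_f$ and $Q_f$, the integrals in \eqref{eq:maindoubest} depend only on the geometry of $\mathcal A(r,R)$.

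For the \emph{upper bound}, I would take a candidate density $\varrho$ on the image side built from $\rho$. Concretely, for $y = f(x)$ with $x \in \mathcal A(r,R)$, set $\varrho(y) = \rho(|x|)/\ell_f(x,0)$ (and $\varrho = 0$ off $f(\mathcal A(r,R))$); here $\ell_f(x,0) = \min_{h}|\partial_h f(x)|/|h\cdot u|$ is exactly the quantity that measures how much a radial-type displacement is compressed. The key computation is that for almost every $\gamma \in \Gamma$, the image curve $f\circ\gamma$ joins the two boundary components of $f(\mathcal A(r,R))$ and its $\varrho$-length is at least $\int_\gamma \rho(|x|)\,\frac{|d(|x|)|}{\text{(radial stretch)}} \cdot (\text{radial stretch}) \ge \int_r^R \rho(t)\,dt = 1$, using that the radial projection of $d(f\circ\gamma)$ is controlled below by $\ell_f$ and the first normalization in \eqref{eq:admcons}; this is the standard ``$\rho\,dt$ integrates to $1$ along any connecting curve'' trick, applied after pushing forward. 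Then $\M(f(\Gamma)) \le \int \varrho(y)^n \, dm_n(y)$, and the change of variables $dm_n(y) = J_f(x)\,dm_n(x)$ turns the right side into $\int_{\mathcal A(r,R)} \rho(|x|)^n \, J_f(x)/\ell_f(x,0)^n \, dm_n(x) = \int_{\mathcal A(r,R)} \rho(|x|)^n D_f(x)\,dm_n(x)$, as desired.

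For the \emph{lower bound}, I would argue dually: I must bound $\M(f(\Gamma))$ from below, equivalently produce, for \emph{every} admissible $\varrho$ for $f(\Gamma)$, a lower bound on $\int \varrho^n\,dm_n$. Given $\varrho \in \adm(f(\Gamma))$, I would test it against radial segments $\gamma_u : t \mapsto tu$, $t\in(r,R)$, for each direction $u \in \Sph$: since $f\circ\gamma_u \in f(\Gamma)$ (for a.e.\ $u$, after discarding a family of zero modulus), we get $\int_r^R \varrho(f(tu))\,|\partial_u f(tu)|\,dt \ge 1$. Now multiply by $p(u)^n$, use H\"older's inequality in $t$ (with exponents $n$ and $n/(n-1)$) against the weight $|x|^{-n/(n-1)} = t^{-n/(n-1)}$ cleverly — or more directly, raise to an appropriate power and integrate in $u$ against $d\sigma(u)$, invoking the second normalization $\int p^{n-1}\,d\sigma = 1$ and the spherical-coordinate volume element $dm_n(x) = t^{n-1}\,dt\,d\sigma(u)$ — to assemble $\left(\int_{\mathcal A(r,R)} p(x/|x|)^n \, |\partial_u f(x)|^{n/(n-1)} \, J_f(x)^{?}\, \frac{dm_n(x)}{|x|^n}\right)$ against $\int \varrho^n J_f\, dm_n$. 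The bookkeeping must be arranged so that the power of $J_f$ that appears is exactly $-1/(n-1)$, producing $Q_f(x) = (|\partial_u f(x)|^n/J_f(x))^{1/(n-1)}$, and so that the final inequality reads $1 \le \left(\int p(x/|x|)^n Q_f(x)\,\frac{dm_n(x)}{|x|^n}\right) \cdot \left(\int \varrho^n\,dm_n\right)^{1/(n-1)}$ after change of variables $\int \varrho(f(x))^n J_f(x)\,dm_n(x) = \int \varrho(y)^n\,dm_n(y)$; rearranging and taking the infimum over $\varrho$ gives the stated lower bound.

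The \textbf{main obstacle} is making the lower-bound argument rigorous under the weak regularity assumed: one must justify that a.e.\ radial segment maps to a \emph{rectifiable} curve in $f(\Gamma)$ along which the admissibility inequality is usable (this needs an ACL/Fuglede-type argument ensuring that the exceptional set of radial directions where $f$ fails to be absolutely continuous on the segment, or where $|\partial_u f| = |f'(x)u|$ fails, has zero modulus in the appropriate sense), and that the interchange of the H\"older inequality with the integration over $\Sph$ is legitimate — including that $Q_f(x)$ is measurable and that the change-of-variables formula applies despite $\mathcal A(r,R)$ touching neither $0$ nor $\partial\mathbb B^n$ but $f$ being only locally quasiconformal. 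Once the Fubini/ACL and change-of-variables technicalities are in place, the algebra collapses to the two displayed inequalities; I expect the upper bound to be essentially the argument of \cite[Lemma~2.4]{GG09} and the lower bound to be the genuinely new content, with the normal dilatation $Q_f$ emerging precisely because raising the radial-segment inequality to the power $n/(n-1)$ and dividing by $J_f^{1/(n-1)}$ is the unique way to land on a conformally natural integrand.
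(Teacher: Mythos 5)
Your proposal matches the paper's proof in all essentials. The upper bound is identical: push $\rho(|x|)/\ell_f(x,0)$ forward, verify admissibility via the radial-projection inequality $|h\cdot u|\le |f_h(x)|/\ell_f(x)$, then change variables. For the lower bound you and the paper both test an arbitrary $\varrho\in\adm(f(\Gamma))$ against the images of radial segments (after discarding a Fuglede-negligible subfamily) and use H\"older to extract $Q_f$. The only organizational difference is that the paper proceeds in two stages: it first establishes the \emph{exact} formula $\M(\Sigma_{r,R})=\int_{\Sph}\bigl(\int_r^R Q_f(tu)\,dt/t\bigr)^{1-n}d\sigma(u)$ by exhibiting the extremal metric $\rho^*$ and proving its optimality via inequality \eqref{6a.7}, invokes the comparison principle $\M(\Sigma_{r,R})\le\M(f(\Gamma))$, and only then applies a second H\"older step to introduce $p(u)$; you propose collapsing this into a single H\"older in $(t,u)$ jointly, which also works and is slightly more direct, though it sacrifices the sharp intermediate identity \eqref{6.8}. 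The ``bookkeeping'' you flag as the main obstacle is exactly the split $|\partial_u f|=Q_f^{(n-1)/n}J_f^{1/n}$ and the factorization $p^{n-1}\varrho|\partial_u f|=(\varrho J_f^{1/n}t^{(n-1)/n})\cdot(p^{n-1}Q_f^{(n-1)/n}t^{-(n-1)/n})$ before H\"older with exponents $n$ and $n/(n-1)$; once you write this down, the integrand lands on $Q_f/|x|^n$ exactly as you anticipated.
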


Note that results similar to the right-hand inequality (the upper bound) are found in \cite{GG09} (for spherical rings)
and \cite{GSV25} (for spherical hemirings).
For completeness, however, we briefly give a proof of the upper bound as well.

\begin{proof}
We start with the lower bound (the left-hand side estimate).

Let $\sigma_{r,R}$ denote the family of all radial segments $\gamma_u=tu,$ $r\le t \le R,$ $u\in\mathbb S^{n-1},$
joining the boundary spheres  of the spherical annulus $\mathcal A(r,R).$  The property $f\in W^{1,n}_{\rm loc}$ guarantees absolute continuity of $f$ on almost all lines parallel to coordinate axes or equivalently (by passing to spherical coordinates), the absolute continuity of $f$ on $\gamma_u$ for almost all $u\in\mathbb S^{n-1}.$ If $\widetilde{\sigma}_{r,R}$ is the subfamily of $\sigma_{r,R}$ on which $f$ fails to be absolutely continuous, then, by Fuglede's theorem \cite{Fug57} (cf. \cite[p.~95]{Vai71}), $\M(\widetilde{\sigma}_{r,R})=0.$ Denote by
$\Sigma_{r,R}$ the family of curves $\gamma^*_u=f(\gamma_u),$ and let $\widetilde{\Sigma}_{r,R}=f(\widetilde{\sigma}_{r,R}).$ Since $\M(\widetilde{\sigma}_{r,R})=0,$ the modulus (geometric) definition of $K$-quasiconformality implies that $\M(\widetilde{\Sigma}_{r,R})=0.$ So, without loss of generality, we can assume that each curve $\gamma^*_u\in \Sigma_{r,R}$ is absolutely continuous. In addition, we will employ the well-known regularity properties of quasiconformal mappings, including differentiability a.e. and the Lusin $(N)$ and $(N^{-1})$-properties, throughout the proof; see, e.g., \cite[32.2, 33.2]{Vai71}.

First, we deduce an explicit formula (\ref{6.8}) below for the modulus ${\M}(\Sigma_{r,R})$ of
the family $\Sigma_{r,R}.$

Define a measurable non-negative function $\rho_0^*(y):\mathbb R^n\to [0,\infty],$
by setting $\rho_0^*(y)=\rho^*\circ f^{-1}(y),$ where
\begin{equation}\label{6a.4}
\rho^*(x)\,=\, \left(\frac{|f_u(tu)|}{J_f(tu)}\right)^{1/(n-1)}\frac{1}{t I(u)}
\end{equation}
at all points of regularity $x=tu (t>0, u\in\Sph)$ of $f$ in $\mathcal A(r,R)$ with
\begin{equation*}
I(u)\,=\,\int\limits_r^R
Q_f(tu) \frac{dt}{t}\,,
\end{equation*}
and set $\rho^*(x)=0$ for the rest of points $x$ in $\mathbb R^n.$

Let us show that $\rho^*_0$ is admissible for the family of curves $\Sigma_{r,R}.$
Indeed, a computation shows that
\begin{equation*}
\begin{split}
\int\limits_{\gamma_{u}^*}\rho^*_0(y)|dy|\,
&=\,\int\limits_r^R\rho^*(tu)|f_u(tu)|\,dt
=\,\int\limits_r^R\frac{|f_u(tu)|^{n/(n-1)}}{tJ_f(tu)^{1/(n-1)}I(u)}dt\, \\
&=\,\frac{1}{I(u)}\int\limits_r^R Q_f(tu)\frac{dt}{t}\,
=\,1
\end{split}
\end{equation*}
for a.e.~$u\in\Sph.$
This normalized admissible function is needed to prove the key inequality (\ref{eq:Sigma}) below.
We next show the claim that
\begin{equation}\label{6a.7}
\int\limits_{\mathbb R^n}{\rho_0(y)}^n dm_n(y)\ge \int\limits_{\mathbb R^n}\rho^*_0(y)^n dm_n(y)
\end{equation}
for any $\rho_0\in\adm(\Sigma_{r,R}).$
Since
\begin{equation*}
\begin{split}
\int\limits_{\mathbb R^n}\rho^*_0(y)^n dm_n(y)\,
&=\,\int\limits_{r<|x|<R}\rho^*(x)^nJ_f(x)\,dm_n(x)\\&
=\,\int\limits_{\mathbb S^{n-1}}\int\limits_r^R\frac{|f_u(tu)|^{n/(n-1)}t^{n-1}}%
{t^n J_f(tu)^{1/(n-1)}I(u)^n}\,dt\,d\sigma(u) \\
&=\int\limits_{\mathbb S^{n-1}}\int\limits_r^R\frac{Q_f(tu)}{I^n(u)}\frac{dt}{t}\,d\sigma(u)\,
=\,\int\limits_{\mathbb S^{n-1}}I(u)^{1-n}d\sigma(u)\,,
\end{split}
\end{equation*}
the above claim implies
$$
\M(\Sigma_{r,R})\,=\,\inf_{\rho_0\in\adm(\Sigma_{r,R})}
\int\limits_{\mathbb R^n}\rho_0(y)^ndm_n(y)\,
=\,\int\limits_{\mathbb S^{n-1}}I(u)^{1-n}d\sigma(u)\,.
$$
Hence, we have
\begin{equation}\label{6.8}
\M(\Sigma_{r,R})
=\,\int\limits_{\mathbb S^{n-1}}\left(\int_r^R Q_f(tu)\frac{dt}{t}\right)^{1-n}d\sigma(u)\,.
\end{equation}
To prove (\ref{6a.7}), we begin with the evident inequality
\begin{equation*}
\int\limits_r^R(\rho(tu)-\rho^*(tu))|f_u(tu)|dt\,
=\,\int\limits_{\gamma^*_u}(\rho_0(y)-\rho^*_0(y))|dy|\,
=\,\int\limits_{\gamma^*_u}\rho_0(y)|dy|-1\,
\geq\, 0
\end{equation*}
for a.e.~$u\in\Sph.$
If we set $\rho(x)=\rho_0(f(x)),$ and recall (\ref{6a.4}) which is equivalent to
\begin{equation*}
\rho^*(tu)^{n-1}J_f(tu)\,=\,\frac{|f_u(tu)|}{t^{n-1}I(u)^{n-1}}\,,
\end{equation*}
then we get
\begin{equation*}
\begin{split}
\int\limits_{f(\mathcal A(r,R))}(\rho_0{\rho^*_0}^{n-1}-{\rho^*_0}^n)dm_n\,&
=\,\int\limits_{\mathcal A(r,R)}(\rho-\rho^*){\rho^*}^{n-1}J_f dm_n\\
&=\,\int\limits_{\mathbb S^{n-1}}\int\limits_r^R(\rho(tu)-\rho^*(tu))\frac{|f_u(tu)|}{t^{n-1}I(u)^{n-1}}t^{n-1}dt\,d\sigma(u)\\
&=\,\int\limits_{\mathbb S^{n-1}}\left(\frac{1}{I(u)^{n-1}}\int\limits_r^R(\rho(tu)-\rho^*(tu))|f_u(tu)|\,dt\right)d\sigma(u)\,
\geq\, 0\,.
\end{split}
\end{equation*}
Thus, for an arbitrary $\rho_0\in\adm(\Sigma_{r,R}),$ we have
\begin{equation*}
\int\limits_{\mathbb R^n}{\rho^*_0}^n dm_n\,
\le\,\int\limits_{\mathbb R^n}\rho_0{\rho^*_0}^{n-1}dm_n\,.
\end{equation*}
On the other hand, by the H\"older inequality,
\begin{equation*}
\int\limits_{\mathbb R^n}\rho_0{\rho^*_0}^{n-1}dm_n\,
\leq\, \left(\,\int\limits_{\mathbb R^n}\rho_0^{n}dm_n\right)^{1/n}
\left(\int\limits_{\,\mathbb R^n}{\rho^*_0}^{n}dm_n\right)^{(n-1)/n}\,.
\end{equation*}
Thus,
\begin{equation*}
\int\limits_{\mathbb R^n}{\rho^*_0}^n dm_n\,
\leq\, \left(\,\int\limits_{\mathbb R^n}\rho_0^{n}dm_n\right)^{1/n}
\left(\,\int\limits_{\mathbb R^n}{\rho^*_0}^{n}dm_n\right)^{(n-1)/n}\,,
\end{equation*}
which implies (\ref{6a.7}).

Next, by the Comparison Principle, we obtain
\begin{equation*}
{\M}(\Sigma_{r,R})\,\leq\, {\M}(f(\Gamma))\,.
\end{equation*}
Hence,
\begin{equation}\label{eq:Sigma}
{\M}(f(\Gamma))\,\geq\, \int\limits_{\mathbb S^{n-1}}\frac{d\sigma(u)}{I(u)^{n-1}}\,
=\,\int\limits_{\mathbb S^{n-1}}\left(\int_r^R Q_f(tu)\frac{dt}{t}\right)^{1-n}d\sigma(u),
\end{equation}
which provides the required lower estimate.

To obtain the desired lower bound in (\ref{eq:maindoubest})
we will proceed in a standard manner.
Let $p:\mathbb S^{n-1}\to {\mathbb R}$ be a nonnetagive measurable function,
satisfying the second equality in (\ref{eq:admcons}).
We rewrite that equality as
\begin{equation*}
1\,=\,\int\limits_{\mathbb S^{n-1}}\frac{1}{I(u)^{(n-1)/n}}\, p(u)^{n-1}I(u)^{(n-1)/n}d\sigma(u)\,,
\end{equation*}
and apply the H\"older inequality with exponents $n$ and $n/(n-1)$ to obtain
\begin{equation*}
1\,\leq\, \left(\int\limits_{\,\,\mathbb S^{n-1}}\frac{d\sigma(u)}{I(u)^{n-1}}\right)^{1/n}
\left(\int\limits_{\,\,\mathbb S^{n-1}}p(u)^n I(u)\,d\sigma(u)\right)^{1-1/n}.
\end{equation*}
Thus, we arrive at the inequality
\begin{equation*}
\int\limits_{\,\mathbb S^{n-1}}\frac{d\sigma(u)}{I(u)^{n-1}}\,
\geq\, \left(\,\int\limits_{\,\mathbb S^{n-1}}p(u)^n I(u)d\sigma(u)\right)^{1-n}.
\end{equation*}
Substituting the expression in (\ref{6.8}) into the above inequality yields
\begin{equation*}
\begin{split}
{\M}(f(\Gamma))\,&\geq\, \left(\,\int\limits_{\,\mathbb S^{n-1}}p(u)^nI(u)\,d\sigma(u)\right)^{1-n}
\\&=\,\left(\,\int\limits_{\,\mathcal A(r,R)}p\left(\frac{x}{|x|}\right)^n Q_f(x)\frac{dm_n(x)}{|x|^n}\right)^{1-n}\,.
\end{split}
\end{equation*}
This completes the proof of the lower bound in (\ref{eq:maindoubest}).


\medskip

We now prove the upper bound. Fix a spherical ring $\mathcal A=\mathcal A(r,R),$ $0<r<R\le 1.$ Let $\mathcal N$ denote the set of all points of $\mathcal A$ where $f$ is either non-differentiable or differentiable but $J_f(x)=0.$ The $n$-dimensional Lebesque measure of $\mathcal N$ vanishes, which follows from a.e.~differentiability of quasiconformal mappings and validity of the Lusin $(N^{-1})$-property. Let $\gamma\in\Gamma_{\mathcal A}$ and $\gamma^*=f(\gamma)$ be its image under $f.$ Note that due to Fuglede's theorem \cite{Fug57}, we can restrict ourselves considering only rectifiable curves $\gamma$ and $\gamma^*$ since the conformal modulus of subfamilies consisting of non-rectifiable curves vanishes.

Let $\rho$ be a nonnegative measurable function on $(r,R)$ satisfying the first equality in (\ref{eq:admcons}).
Then we define $\rho_0$ by
\begin{equation*}
\rho_0(y)\,=\,\frac{\rho(|x|)}{\ell_f(x,0)}
\end{equation*}
for $y\in f(\mathcal A\setminus\mathcal N),$
where $x=f^{-1}(y),$ and $\rho_0(y)=\infty$ for $y\in f(\mathcal N),$ and  $\rho_0(y)=0$ otherwise.
We will show that $\rho_0\in \adm(f(\Gamma_\A)).$

Take $\gamma\in\Gamma_\A$ such that $\gamma$ and $\gamma^*$ are rectifiable.
We parametrize $\gamma$ by arclength parameter $s.$
If $f$ is regular at $x=\gamma(s),$ then $df(\gamma(s))/ds=f'(x)h=f_h(x),$ where $h=\gamma'(s)\in\Sph$ for a.e.~$s.$
We further let  $t=|x|=|\gamma(s)|$ and $u=x/|x|.$
Then, $dt/ds\le |dt/ds|=|h\cdot u|.$
Also, by the definition of $\ell_f(x,0)=\ell_f(x),$ we have $\ell_f(x)\le |f_h(x)|/|h\cdot u|,$
equivalently, $|h\cdot u|\le |f_h(x)|/\ell_f(x).$
Hence,
$$
\int\limits_{\gamma^*} \rho_0(y)|dy|\,
=\,\int\limits \frac{\rho(|x|)}{\ell_f(x)}|f_h(x)|\,|dx|
\ge\,\int\limits_{\gamma}\rho(t)|h\cdot u|\,ds
\ge\,\int\limits_r^R\rho(t)dt\,
=1,
$$
thus it has been confirmed that $\rho_0\in\adm(f(\Gamma_\A)).$
To complete the proof of the upper bound of (\ref{eq:maindoubest}),
we use the change of variables formula together with the absolute continuity in measure (the Lusin $(N)$-property),
\begin{equation*}
\M(f(\Gamma))\,\le\,\int\limits_{f(\mathcal A)}\rho^*(y)^n dm_n(y)\,
=\,\int\limits_{\mathcal A}\rho(|x|)^n D_f(x)\,dm_n(x)\,,
\end{equation*}
which completes the proof.
\end{proof}

\medskip
The sharp lower bound for the modulus of $f(\Gamma)$ in (\ref{eq:maindoubest}) has been given in (\ref{eq:Sigma}) (in terms of the normal directional dilatation $Q_f$). Since the upper estimate in (\ref{eq:maindoubest}) holds for any admissible metric $\rho(|x|)$ satisfying the first equality in (\ref{eq:admcons}), we derive the extremal upper bound for $\M(f(\Gamma))$ now in terms of the angular directional dilatation $D_f.$

\begin{thm}\label{thm:thm4} 
Let $f:\pcub\to \mathbb R^n$ be a locally quasiconformal embedding.
Then for any $0<r<R\le 1,$
\begin{equation}\label{eq:thm4}
\begin{split}
\int\limits_{\mathbb S^{n-1}}\left(\int_r^R Q_f(tu)\frac{dt}{t}\right)^{1-n}d\sigma(u) \,
&\le\,\M(f(\Gamma_{\A(r,R)}))\,\\
&\le\,\left(\int\limits_r^R\left(\int_{\mathbb S^{n-1}} D_f(tu)t^{n-1}d\sigma(u)\right)^{1/(1-n)}
dt\right)^{1-n}\,.
\end{split}
\end{equation}
\end{thm}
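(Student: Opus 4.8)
The plan is to apply Theorem~\ref{thm:main} with optimal choices of the admissible weights $\rho(t)$ and $p(u)$. The left-hand inequality is immediate: it is exactly the inequality \eqref{eq:Sigma} established inside the proof of Theorem~\ref{thm:main} (equivalently, take $p\equiv\omega_{n-1}^{-1/(n-1)}$ in the lower bound of \eqref{eq:maindoubest} and then observe that by the power-mean / Jensen inequality one may replace the constant $p$ by the extremal exponent; in fact the cleanest route is to note that \eqref{6.8} together with the Comparison Principle already gives precisely the stated lower bound). So the real content is the upper bound, which requires choosing $\rho(t)$ so as to minimize the right-hand side $\int_{\A(r,R)}\rho(|x|)^n D_f(x)\,dm_n(x)$ of \eqref{eq:maindoubest} subject to $\int_r^R\rho(t)\,dt=1$.

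The key step is the following one-variable optimization. Writing in spherical coordinates,
\begin{equation*}
\int\limits_{\A(r,R)}\rho(|x|)^n D_f(x)\,dm_n(x)
\,=\,\int\limits_r^R \rho(t)^n\Bigl(\int_{\Sph} D_f(tu)\,d\sigma(u)\Bigr)t^{n-1}\,dt
\,=\,\int\limits_r^R \rho(t)^n\, w(t)\,dt,
\end{equation*}
where I abbreviate $w(t)=t^{n-1}\int_{\Sph} D_f(tu)\,d\sigma(u)$. One then minimizes $\int_r^R\rho(t)^n w(t)\,dt$ over nonnegative $\rho$ with $\int_r^R\rho(t)\,dt=1$. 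This is a standard Lagrange/Hölder computation: by Hölder's inequality with exponents $n$ and $n/(n-1)$ applied to $1=\int_r^R \rho(t)\,dt=\int_r^R \bigl(\rho(t) w(t)^{1/n}\bigr) w(t)^{-1/n}\,dt$, one gets
\begin{equation*}
1\,\le\,\Bigl(\int_r^R \rho(t)^n w(t)\,dt\Bigr)^{1/n}\Bigl(\int_r^R w(t)^{-1/(n-1)}\,dt\Bigr)^{(n-1)/n},
\end{equation*}
hence $\int_r^R \rho(t)^n w(t)\,dt\ge\bigl(\int_r^R w(t)^{-1/(n-1)}\,dt\bigr)^{1-n}$, with equality for the choice $\rho(t)=c\,w(t)^{-1/(n-1)}$ where $c$ is the normalizing constant. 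Since Theorem~\ref{thm:main} permits an arbitrary admissible $\rho$, feeding in this optimal $\rho$ (which is measurable and finite a.e.\ because $\log D_f$ is locally essentially bounded by \eqref{eq:Qf}, so $w(t)\in(0,\infty)$ for a.e.\ $t$) yields
\begin{equation*}
\M(f(\Gamma_{\A(r,R)}))\,\le\,\Bigl(\int_r^R \Bigl(\int_{\Sph} D_f(tu)\,t^{n-1}\,d\sigma(u)\Bigr)^{1/(1-n)}\,dt\Bigr)^{1-n},
\end{equation*}
which is exactly the right-hand side of \eqref{eq:thm4}.

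The main obstacle, such as it is, is a technical regularity point rather than a conceptual one: one must make sure the extremal weight $\rho(t)=c\,w(t)^{-1/(n-1)}$ is genuinely admissible, i.e.\ that $w(t)$ is a well-defined, positive, measurable function of $t$ with $\int_r^R w(t)^{-1/(n-1)}\,dt$ finite and nonzero. Measurability of $t\mapsto\int_{\Sph}D_f(tu)\,d\sigma(u)$ follows from Fubini/Tonelli together with the Borel measurability of $D_f$ on the set of regular points (noted after the definition of $D_f$); positivity and local essential boundedness of $\log D_f$ follow from \eqref{eq:Qf} and local quasiconformality; and finiteness of the integral for $0<r<R\le 1$ follows since $D_f$ is bounded on the compact annulus $\overline{\A(r,R)}$. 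If the integral $\int_r^R w(t)^{-1/(n-1)}\,dt$ happens to vanish (which would force the right side of \eqref{eq:thm4} to be $+\infty$), the inequality is trivial; otherwise the argument above applies verbatim. A brief remark to this effect suffices to close the proof.
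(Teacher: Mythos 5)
Your argument is correct and essentially identical to the paper's: the paper also gets the lower bound directly from \eqref{eq:Sigma} (via \eqref{6.8} and the Comparison Principle), and obtains the upper bound by optimizing $\rho(t)$ in the right side of \eqref{eq:maindoubest} with exactly the same H\"older computation, yielding the extremal choice $\rho(t)=c\,\varphi(t)^{1/(1-n)}$ with $\varphi(t)=\int_{\Sph}D_f(tu)\,t^{n-1}\,d\sigma(u)$. The only flaw is in your parenthetical aside: taking $p$ constant and then invoking Jensen does not produce the stated lower bound — Jensen only shows the stated bound is stronger than the constant-$p$ bound — but this aside is superfluous since, as you say, \eqref{eq:Sigma} already gives it directly (alternatively, the $\Sph$-side optimization over $p$ via H\"older, rather than Jensen, recovers it from \eqref{eq:maindoubest}).
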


Employing the classical inner and outer dilatations and their relations (\ref{eq:Qf}) with the directional dilatations we obtain a weaker double bound than (\ref{eq:thm4}).

\begin{cor}\label{cor:cor1}
Let $f:\pcub\to \mathbb R^n$ be a locally quasiconformal  embedding. 
Then for any $0<r<R\le 1,$
\begin{equation*}
\begin{split}
\int\limits_{\mathbb S^{n-1}}\left(\int_r^R K_f(tu)\frac{dt}{t}\right)^{1-n}d\sigma(u) \,
&\le\,\M(f(\Gamma_{\A(r,R)}))\,\\
&\le\,\left(\int\limits_r^R\left(\int_{\mathbb S^{n-1}} L_f(tu)t^{n-1}d\sigma(u)\right)^{1/(1-n)}
dt\right)^{1-n}\,.
\end{split}
\end{equation*}
\end{cor}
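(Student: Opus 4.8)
The plan is to obtain Corollary~\ref{cor:cor1} directly from Theorem~\ref{thm:thm4} by monotonicity, using only the chain of inequalities \eqref{eq:Qf} together with the monotonicity of the integral and of the power functions involved. The point is that no new curve-family argument is needed: the work has already been done in Theorem~\ref{thm:thm4}, and what remains is to degrade the sharp directional bounds into the classical ones.

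For the lower bound, I would start from the left-hand side of \eqref{eq:thm4}. By \eqref{eq:Qf} we have $Q_f(x)\le K_f(x)^{1/(n-1)}$ at every regular point $x$ of $f$, hence $Q_f(tu)\le K_f(tu)^{1/(n-1)}$ for a.e.\ $t$ and a.e.\ $u$. Integrating in $t$ against $dt/t$ gives, for a.e.\ $u\in\mathbb S^{n-1}$,
\[
\int_r^R Q_f(tu)\frac{dt}{t}\,\le\,\int_r^R K_f(tu)^{1/(n-1)}\frac{dt}{t}\,\le\,\left(\int_r^R K_f(tu)\frac{dt}{t}\right)^{1/(n-1)},
\]
where the last step is Jensen's (or the power-mean) inequality applied to the concave function $s\mapsto s^{1/(n-1)}$ together with the normalization constant $\int_r^R dt/t=\log(R/r)$; more simply, since $1/(n-1)\le 1$ one may use subadditivity of $s\mapsto s^{1/(n-1)}$ after a dyadic decomposition, but the Jensen route is cleanest. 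Raising both sides to the power $1-n<0$ reverses the inequality, so $\bigl(\int_r^R Q_f(tu)\,dt/t\bigr)^{1-n}\ge\bigl(\int_r^R K_f(tu)\,dt/t\bigr)^{1-n}$; integrating over $u\in\mathbb S^{n-1}$ and invoking Theorem~\ref{thm:thm4} yields the desired lower bound. Actually, the simplest correct phrasing avoids Jensen entirely: since $1-n<0$, raising $Q_f(tu)\le K_f(tu)^{1/(n-1)}$ does \emph{not} immediately compare the integrals, so the Jensen step genuinely is needed; I would make sure to state it explicitly as $\|g\|_{L^1(d\nu)}^{1/(n-1)}\ge \|g^{1/(n-1)}\|_{L^1(d\nu)}$ for the probability measure $d\nu=(\log(R/r))^{-1}dt/t$, which is exactly Jensen for the concave map $s\mapsto s^{1/(n-1)}$ after normalizing, the extra constant $\log(R/r)$ being harmless because it appears on both sides.

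For the upper bound, I would argue symmetrically starting from the right-hand side of \eqref{eq:thm4}. By \eqref{eq:Qf}, $D_f(x)^{1/(1-n)}\le L_f(x)$, equivalently (since $1/(1-n)<0$) $D_f(x)\ge L_f(x)^{1-n}$; actually the cleaner route is to use $D_f(x)^{-1/(n-1)}\le Q_f(x)\le L_f(x)$, i.e.\ $D_f(x)\le D_f(x)$ — let me instead use the single inequality from \eqref{eq:Qf} that directly compares $D_f$ and $L_f$, namely $L_f(x)^{1/(1-n)}\le D_f(x)^{1/(1-n)}$ is false in the needed direction, so the relevant one is $\frac{1}{D_f(x)^{1/(n-1)}}\le L_f(x)$ reading $D_f(x)\ge L_f(x)^{1-n}$, hence $D_f(x)^{1/(1-n)}\le L_f(x)$ since the exponent $1/(1-n)$ is negative. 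Then for a.e.\ $t$,
\[
\int_{\mathbb S^{n-1}}D_f(tu)^{1/(1-n)}t^{n-1}d\sigma(u)\,\le\,\int_{\mathbb S^{n-1}}L_f(tu)t^{n-1}d\sigma(u),
\]
wait — the integrand on the left of \eqref{eq:thm4} is $\bigl(\int_{\mathbb S^{n-1}}D_f(tu)t^{n-1}d\sigma(u)\bigr)^{1/(1-n)}$, not $\int D_f^{1/(1-n)}$. So the correct move is: by $D_f(x)\ge L_f(x)^{1-n}$ we get $\int_{\mathbb S^{n-1}}D_f(tu)t^{n-1}d\sigma\ge\int_{\mathbb S^{n-1}}L_f(tu)^{1-n}t^{n-1}d\sigma$, then apply the power-mean/Jensen inequality to the convex function $s\mapsto s^{1-n}$ (exponent $1-n<0$ hence convex on $(0,\infty)$) to get $\int L_f^{1-n}t^{n-1}d\sigma\ge c_n^{n}\bigl(\int L_f\,t^{n-1}d\sigma\bigr)^{1-n}$ up to the normalizing constant $\omega_{n-1}$; raising to the negative power $1/(1-n)$ and then the outer $1-n$ power, the constants reassemble and one lands on the claimed bound. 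The main (and only genuine) obstacle is bookkeeping: keeping track of which exponents are negative — so that "$\le$" flips to "$\ge$" — and making sure every application of Jensen/H\"older is to a concave-versus-convex function on the correct side, with the probability-measure normalizations ($\log(R/r)$ for the radial integral, $\omega_{n-1}$ for the spherical integral) accounted for consistently. There is no analytic difficulty beyond this; the result is a pure corollary.
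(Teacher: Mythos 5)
Your overall plan — degrade the bounds of Theorem~\ref{thm:thm4} using the chain \eqref{eq:Qf} — is exactly the route the paper takes; the paper simply remarks that the Corollary follows by ``employing the classical inner and outer dilatations and their relations \eqref{eq:Qf}.'' However, your execution contains a genuine error, and the Jensen detour you insist is ``genuinely needed'' is in fact neither needed nor correct as written.

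For the lower bound, your claim
$\int_r^R K_f(tu)^{1/(n-1)}\,dt/t \le \bigl(\int_r^R K_f(tu)\,dt/t\bigr)^{1/(n-1)}$
does \emph{not} follow from Jensen: normalizing by $\log(R/r)$ produces the inequality
$\int_r^R K_f^{1/(n-1)}\,dt/t \le (\log(R/r))^{(n-2)/(n-1)}\bigl(\int_r^R K_f\,dt/t\bigr)^{1/(n-1)}$,
and the factor $(\log(R/r))^{(n-2)/(n-1)}$ is not $1$ (take $K_f$ constant to see the two sides differ unless $\log(R/r)=1$ or $n=2$); your assertion that the constant ``appears on both sides'' is false because the two sides carry different powers of it. Worse, even granting your inequality, raising $\int Q_f\,dt/t \le \bigl(\int K_f\,dt/t\bigr)^{1/(n-1)}$ to the power $1-n$ gives $\bigl(\int Q_f\bigr)^{1-n}\ge\bigl(\int K_f\bigr)^{-1}$, not $\bigl(\int K_f\bigr)^{1-n}$: you dropped a factor of $n-1$ in the exponent. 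The symmetric upper-bound argument has the same disease — the $\omega_{n-1}$ normalization from Jensen on $\mathbb S^{n-1}$ does not ``reassemble,'' and the $t^{n-1}$ weight produces extra $t$-powers that you do not track.

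The intended argument is far simpler and needs no Jensen at all. From \eqref{eq:Qf} you have the pointwise inequalities $D_f(x)\le L_f(x)$ and $Q_f(x)\le K_f(x)^{1/(n-1)}\le K_f(x)$ (the last step because $K_f\ge 1$ and $1/(n-1)\le 1$). Feeding $Q_f\le K_f$ into the inner radial integral of the left side of \eqref{eq:thm4} and $D_f\le L_f$ into the inner spherical integral of the right side, and then applying monotonicity of the integral together with the fact that the outer powers $1-n$ and $1/(1-n)$ are negative (so each application reverses the direction once, and two reversals restore it on the right), immediately yields Corollary~\ref{cor:cor1}. That is the entire proof.
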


\begin{proof}[Proof of Theorem \ref{thm:thm4}]

The first inequality was already established in (\ref{eq:Sigma}).
We show the second inequality.
Consider
\begin{equation*}
\widehat\rho(t)\,=\,I^{-1}\varphi(t)^{1/(1-n)}\,,\qquad
\varphi(t)\,=\,\int\limits_{\mathbb S^{n-1}} D_f(tu)t^{n-1}\,d\sigma(u)\,,\qquad
I\,=\,\int\limits_r^R \varphi(t)^{1/(1-n)} dt\,.
\end{equation*}
Evidently, $\widehat\rho(t)$ satisfies the condition $\int_r^R\widehat\rho(t)dt=1.$
We shall show that $\widehat\rho(t)$ attains the infimum in the right-hand side of (\ref{eq:maindoubest})
over all admissible $\rho(t).$

Using the H\"older inequality for an arbitrary metric $\rho(t)$, we have
\begin{equation*}
1\,=\,\int\limits_r^R \varrho(t)\,dt\,\le\left(\,\int\limits_r^R\varrho(t)^n\varphi(t)\,dt\right)^{1/n} \left(\,\int\limits_r^R\varphi(t)^{1/(1-n)}dt\right)^{(n-1)/n}\,,
\end{equation*}
and the equality occurs whenever $\rho(t)^n\varphi(t)$ is proportional to $1/\varphi(t)^{1/(n-1)}.$ Clearly, $\widehat\rho(t)$ meets this requirement, since $\widehat\rho(t)\varphi(t)^{1/(n-1)}=I^{-1}.$
Therefore,
\begin{equation*}
\int\limits_r^R \widehat\rho(t)^n\varphi(t)\,dt\,=\,\inf\limits_{\rho}\int\limits_r^R \varrho(t)^n\varphi(t)\,dt\,=\,\left(\,\int\limits_r^R\varphi(t)^{1/(1-n)}dt\right)^{1-n}\,,
\end{equation*}
where the infimum is taken over all $\rho$ satisfying the equality in (\ref{eq:admcons}). Finally, by Fubini's theorem for any admissible metric $\rho,$ one gets
\begin{equation*}
\begin{split}
\int\limits_{\mathcal A(r,R)} \rho(|x|)^n D_f(x)\,dm_n(x)\,&=\,\int\limits_r^R \rho(t)^n\left(\,\,\int\limits_{\mathbb S^{n-1}} D_f(tu)t^{n-1}d\sigma(u)\right)\,dt\,\\&\ge\,\left(\,\int\limits_r^R\varphi(t)^{1/(1-n)}dt\right)^{1-n}\,=\,I^{1-n}\,.
\end{split}
\end{equation*}
This completes the proof.
\end{proof}


\section{Cavitation and main results}\label{sec3}

In this section, we establish conditions for cavitation and non-cavitation at the origin.
The following theorem provides a necessary and sufficient condition for non-cavitation.

\begin{thm}\label{thm:necsufcond} 
Let $f:\pcub\to \mathbb R^n$ be a locally quasiconformal embedding.
Then $f$ does not admit cavitation at the origin if and only if
\begin{equation*}
\lim_{r\to 0^+} \mo\, f(\A(r,1))=+\infty\,.
\end{equation*}
\end{thm}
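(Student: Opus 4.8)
The plan is to use the fact that $f$ is an embedding of the closed punctured ball $\pcub$ that preserves the insides of rings, so that $V=f(\Bno)$ is a ring-like region whose bounded complementary component $C_0$ is the ``cavity.'' Write $V_r=f(\A(r,1))$; this is a ring with bounded boundary component $f(S_r)$ and outer boundary component $f(S_1)$ (together with $\infty$). By the Comparison Principle, $\mo\, V_r$ is monotone increasing as $r\to 0^+$, so the limit $m_\infty:=\lim_{r\to 0^+}\mo\, V_r\in(0,+\infty]$ always exists. The claim is that $f$ is non-cavitational at the origin precisely when $m_\infty=+\infty$.

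First I would prove the easy direction: if $f$ \emph{does} admit cavitation, then $C_0$ is a non-degenerate continuum, and each $V_r$ separates $C_0$ from $f(S_1)$, hence $V_r$ is a subring of the fixed ring $\ring(C_0,f(S_1))$. By the Comparison Principle, $\mo\, V_r\le \mo\, \ring(C_0,f(S_1))<+\infty$ for all $r$, since a ring with two non-degenerate boundary components has finite modulus. Therefore $m_\infty<+\infty$. Contrapositively, $m_\infty=+\infty$ forces non-cavitation.

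For the converse — the substantive direction — suppose $f$ does not admit cavitation, so $C_0=\bigcap_{r>0}\overline{f(\A(0,r))}$ is a single point; after a translation I may take it to be the origin, and in fact by continuity of $f^{-1}$ on $V$ one checks $f(x)\to 0$ as $x\to 0$. I want to show $\mo\, V_r\to+\infty$. Pick any point $y_r\in f(S_r)$ and consider the ring $V_r$ separating $\{y_r\}\cup C_0$ (which, being a small set, we treat as essentially a point near $0$) from $\infty$; more precisely, since $C_0$ shrinks to a point, for $r$ small the bounded component of $\R^n\setminus V_r$ has diameter tending to $0$, while $f(S_1)$ stays at a fixed positive distance from $0$. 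Now invoke the multidimensional Teichm\"uller Theorem~\ref{thm:Teich}: it suffices to get a \emph{lower} bound on $\mo\, V_r$, and the cleanest route is a direct modulus estimate. Using the left-hand (lower) bound in Theorem~\ref{thm:main} or, equivalently, Theorem~\ref{thm:thm4}, we have
\begin{equation*}
\M(f(\Gamma_{\A(r,1)}))\le\int\limits_{\mathbb S^{n-1}}\left(\int_r^1 Q_f(tu)\frac{dt}{t}\right)^{1-n}d\sigma(u),
\end{equation*}
so that $\mo\, V_r=[\omega_{n-1}/\M(f(\Gamma_{\A(r,1)}))]^{1/(n-1)}$ is large as soon as the right-hand integral is small, i.e. as soon as $\int_r^1 Q_f(tu)\,dt/t\to\infty$ for a.e.\ $u$. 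But this need \emph{not} hold for a general non-cavitational map, so instead I would argue purely geometrically: since $V_r\subset V$ and $\diam\big(\R^n\setminus V_r\big)_{\text{bdd}}\to 0$ while the outer boundary is bounded away from $0$, the ring $V_r$ contains, for every large $N$, a concentric spherical ring $\A(\rho_1,\rho_2)$ with $\rho_2/\rho_1$ arbitrarily large (the inner complementary component collapses to a point, the outer stays fixed). Concretely, choose $\rho_2$ a fixed fraction of $\dist(0,f(S_1))$ and $\rho_1=\sup_{|x|\le r}|f(x)|\to 0$; then $\A(\rho_1,\rho_2)$ is a subring of $V_r$, so by the Comparison Principle
\begin{equation*}
\mo\, V_r\ge \mo\, \A(\rho_1,\rho_2)=\log(\rho_2/\rho_1)\to+\infty
\end{equation*}
as $r\to 0^+$. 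This completes the converse.

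The main obstacle is the converse direction, and specifically the need to produce a genuinely large \emph{concentric} spherical subring inside $V_r$. Monotonicity and finiteness of the modulus give the forward implication for free, but for the converse one must convert the topological statement ``the bounded complementary component is a point'' into the quantitative statement ``$\mo\,V_r$ diverges.'' The key points to nail down carefully are: (i) that $f(x)\to 0$ as $x\to 0$ — this uses non-cavitation together with the ring-preserving hypothesis and the fact that $f$ is a homeomorphism onto $V$, so $f^{-1}$ maps $V\cap B(0,\varepsilon)$ into a punctured neighborhood of $0$; (ii) that $f(S_1)$, being compact and not containing $0$, keeps the outer complementary component of $V$ at a fixed positive distance $\delta_0>0$ from $0$, so that the spherical ring $\A(\rho_1,\delta_0/2)$ with $\rho_1=\sup_{|x|\le r}|f(x)|$ is indeed contained in $V_r$ and separates its boundary components; and (iii) the verification that $\rho_1\to 0$, which is exactly statement (i) rephrased. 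Once these are in place, the Comparison Principle $\mo\,V_r\ge\log(\delta_0/2\rho_1)$ finishes the argument, and Teichm\"uller's theorem~\ref{thm:Teich} is not even strictly needed here, though it offers an alternative proof of the same divergence.
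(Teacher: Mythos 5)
Your proof is correct and shares the paper's main device. The ``no cavitation $\Rightarrow$ modulus diverges'' direction coincides with the paper's: non-cavitation means $R_0(r)=\max_{|x|=r}|f(x)|\to 0,$ the concentric spherical ring $\A(R_0(r),R_1)$ is a subring of $f(\A(r,1)),$ and the Comparison Principle gives $\mo f(\A(r,1))\ge\log(R_1/R_0(r))\to+\infty;$ you are also right that Theorem~\ref{thm:Teich} is not needed for this half. For the other direction the routes differ in packaging. You argue by contrapositive: if $C_0$ is non-degenerate, each $f(\A(r,1))$ is a subring of the fixed ring $\ring(C_0,C_1),$ hence $\mo f(\A(r,1))\le\mo\ring(C_0,C_1)<+\infty.$ The paper instead applies Theorem~\ref{thm:Teich} directly to obtain the quantitative two-sided bound $R_1e^{-\mathcal M(r)}\le R_0(r)\le R_1e^{A_n-\mathcal M(r)},$ which yields both implications at once and an explicit rate. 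Both proofs ultimately rest on a Teichm\"uller-type estimate: the finiteness you invoke (a ring in $\Sn$ with two non-degenerate complementary continua has finite modulus) is precisely what such estimates deliver, so you should state and cite that fact explicitly, since it carries the full weight of your forward direction. One sign slip in your discarded tangent: Theorems~\ref{thm:main} and \ref{thm:thm4} give $\M(f(\Gamma_{\A(r,1)}))\ge\int_{\mathbb S^{n-1}}\bigl(\int_r^1 Q_f(tu)\,dt/t\bigr)^{1-n}d\sigma(u),$ not $\le$ as you wrote; a lower bound on $\M(f(\Gamma))$ yields only an \emph{upper} bound on $\mo f(\A(r,1)),$ so the $Q_f$-estimate could never force divergence. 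The correct route through Theorem~\ref{thm:main} for non-cavitation is the $D_f$ upper bound (as in Theorem~\ref{thm:suf3}). You correctly abandoned the tangent, but the obstruction is the inequality direction, not merely that the $Q_f$-integral ``need not'' blow up.
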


For convenience, we assume that the ring $f(\B^n_0)$ separates $0$ from $\infty.$
When $f(\A(r,1))$ is expressed as $\ring(C_0(r), C_1),$ set
$$
R_0(r)=\max_{x\in C_0(r)}|x| \aand R_1=\min_{x\in C_1}|x|\,.
$$
Then $R_0(r)\le \diam C_0(r) \le 2R_0(r).$
Thus, $f$ does not admit cavitation precisely when $R_0(r)\to 0$ as $r\to0.$
The theorem now follows from the next lemma as a special case when $r\to 0.$

\begin{lem}
Let $f:\pcub\to \mathbb R^n$ be a locally quasiconformal embedding
such that $f(\B^n_0)$ separates $0$ from $\infty.$
If $\mathcal M(r)=\mo f(\ring(r,1))$ is greater than $A_n,$
$$
R_1 e^{-\mathcal M(r)}\,\le\, R_0(r)\,\le\, R_1 e^{A_n-\mathcal M(r)}\,,
$$
where $A_n$ is the constant given in \eqref{eq:An}.
\end{lem}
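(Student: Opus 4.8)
The plan is to sandwich the ring $\mathcal R:=f(\A(r,1))=\ring(C_0(r),C_1)$ between two concentric spherical rings centred at the origin and then to compare moduli. First I would fix the geometry: since $f(\A(r,1))\subset f(\B^n_0)$ and $f(\B^n_0)$ separates $0$ from $\infty$, the bounded complementary component $C_0(r)$ contains $0$, while $C_1\ni\infty$ (indeed $f(\pcub)$ is bounded, because $f$ preserves the insides of rings, so $f(\pcub)$ lies inside the surface $f(S_1)$). For a fixed $r\in(0,1)$ the continuum $C_0(r)$ is non-degenerate, so $0<R_0(r)<R_1<+\infty$, and by the definitions of $R_0(r)$ and $R_1$ we have $C_0(r)\subset\{x:|x|\le R_0(r)\}$ and $C_1\subset\{x:|x|\ge R_1\}\cup\{\infty\}$.

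For the lower bound I would verify that the spherical ring $\mathcal A_0:=\{x:R_0(r)<|x|<R_1\}$ is a subring of $\mathcal R$: it is disjoint from $C_0(r)\cup C_1=\Sn\setminus\mathcal R$, hence contained in $\mathcal R$, and its two complementary components $\{|x|\le R_0(r)\}$ and $\{|x|\ge R_1\}\cup\{\infty\}$ contain $C_0(r)$ and $C_1$ respectively, so it separates $\Sn\setminus\mathcal R$. The Comparison Principle together with $\mo\A(\rho,R)=\log(R/\rho)$ then gives $\log(R_1/R_0(r))=\mo \mathcal A_0\le\mo\mathcal R=\mathcal M(r)$, that is, $R_0(r)\ge R_1e^{-\mathcal M(r)}$. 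This half of the proof uses no lower bound on $\mathcal M(r)$.

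For the upper bound I would invoke the multidimensional Teichm\"uller theorem (Theorem~\ref{thm:Teich}): since $\mathcal R$ separates $0$ from $\infty$ and $\mo\mathcal R=\mathcal M(r)>A_n$, the ring $\mathcal R$ contains a spherical ring $\mathcal A=\{x:a<|x|<b\}$ centred at the origin with $\log(b/a)=\mo \mathcal A\ge\mathcal M(r)-A_n$. Since $\mathcal A\subset\mathcal R$, the connected set $C_0(r)$, which contains $0$ and is disjoint from $\mathcal A$, lies entirely in the bounded complementary component $\{|x|\le a\}$ of $\mathcal A$, so $R_0(r)\le a$; likewise the connected set $C_1$, which contains $\infty$ and is disjoint from $\mathcal A$, lies in $\{|x|\ge b\}\cup\{\infty\}$, so $R_1\ge b$. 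Hence $R_1/R_0(r)\ge b/a\ge e^{\mathcal M(r)-A_n}$, i.e., $R_0(r)\le R_1e^{A_n-\mathcal M(r)}$, and combining the two estimates completes the proof.

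The only step that goes beyond bookkeeping is the ``nesting'' used in both halves: a spherical subring centred at the origin puts the boundary continuum containing $0$ inside its inner ball and the one containing $\infty$ outside its outer ball. This is a soft topological fact --- $\Sn$ minus the open annulus is the disjoint union of the two closed sets $\{|x|\le a\}$ and $\{|x|\ge b\}\cup\{\infty\}$, each $C_j$ is connected and misses the annulus, hence lies in exactly one of them, the choice being forced by whether it contains $0$ or $\infty$ --- but it is precisely where the modulus bound is converted into the quantitative radial estimate, so I would state it carefully. The genuinely essential hypothesis is $\mathcal M(r)>A_n$: without it Theorem~\ref{thm:Teich} does not apply and the upper bound can fail, whereas the lower bound and all the topological facts hold regardless.
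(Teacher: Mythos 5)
Your proof is correct and follows essentially the same route as the paper: the lower bound on $R_0(r)$ comes from the Comparison Principle applied to the spherical subring $\mathcal A(R_0(r),R_1)$ of $f(\A(r,1))$, and the upper bound comes from Theorem~\ref{thm:Teich}, which produces a spherical subring $\mathcal A(\hat r,\hat R)$ centred at $0$ forcing $R_0(r)\le\hat r<\hat R\le R_1$. The extra care you take in justifying the topological nesting (that a connected $C_j$ disjoint from the annulus must lie entirely in the complementary component determined by whether it contains $0$ or $\infty$) and your observation that the hypothesis $\mathcal M(r)>A_n$ is needed only for the upper bound are both correct but are left implicit in the paper's terser proof.
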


\begin{proof}
Fix $r\in(0,1)$ and set $\ring_r=f(\A(r,1)).$
By the condition $\mathcal M(r)>A_n,$
Theorem~\ref{thm:Teich} implies that there is a subring $\A(\hat r, \hat R)$ of $\ring_r$
for some $0<\hat r<\hat R$ with $\log(\hat R/\hat r)\ge \mathcal M(r)-A_n.$
In particular, $R_0(r)\le\hat r<\hat R\le R_1.$
Hence,
$$
\frac{R_1}{R_0(r)}\,\ge\, \frac{\hat R}{\hat r}\,\ge\, e^{\mathcal M(r)-A_n}\,,
$$
and the right-hand inequality follows.
On the other hand, since $\A(R_0(r),R_1)$ is a subring of $\ring_r,$
we have $\mod\A(R_0(r),R_1)=\log(R_1/R_0(r))\le\mo \ring_r=\mathcal M(r),$
and, therefore, $R_1/R_0(r)\le e^{\mathcal M(r)},$ which is equivalent to
the left-hand inequality in the assertion.
\end{proof}



Now, combining Theorem~\ref{thm:necsufcond} and the modulus estimate from
Theorem~\ref{thm:thm4} yields the following sufficient condition for $f$ to admit cavitation at the origin.

\begin{thm}\label{thm:suf1} 
Let $f:\pcub\to\mathbb R^n$ be a locally quasiconformal embedding with
\begin{equation*}
I_Q\,=\,I_Q(f)\,
:=\,\int\limits_{\mathbb S^{n-1}}\left(\int_0^1 Q_f(tu)\frac{dt}{t}\right)^{1-n}d\sigma(u)\,>\,0\,.
\end{equation*}
Then $f$ admits cavitation at the origin.
\end{thm}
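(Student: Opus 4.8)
The plan is to combine Theorem~\ref{thm:necsufcond} with the lower modulus bound from Theorem~\ref{thm:thm4} applied on shrinking rings. Recall that by Theorem~\ref{thm:necsufcond}, $f$ admits cavitation at the origin precisely when $\mo f(\A(r,1))$ does \emph{not} tend to $+\infty$ as $r\to0^+$; since $r\mapsto\mo f(\A(r,1))$ is nondecreasing as $r$ decreases (by the Comparison Principle, $f(\A(r',1))$ is a subring of $f(\A(r,1))$ when $r'<r$), this limit exists in $(0,+\infty]$, and cavitation is equivalent to the limit being finite. So it suffices to produce a uniform upper bound for $\mo f(\A(r,1))$ valid for all $r\in(0,1)$.

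First I would recall the relation between the module and the modulus of the connecting curve family: $\mo f(\A(r,1))=\bigl[\omega_{n-1}/\M(f(\Gamma_{\A(r,1)}))\bigr]^{1/(n-1)}$, so an upper bound on the module is equivalent to a lower bound on $\M(f(\Gamma_{\A(r,1)}))$ that is \emph{bounded away from zero uniformly in $r$}. The left-hand inequality of \eqref{eq:thm4} gives
\begin{equation*}
\M\bigl(f(\Gamma_{\A(r,1)})\bigr)\,\ge\,\int_{\mathbb S^{n-1}}\left(\int_r^1 Q_f(tu)\,\frac{dt}{t}\right)^{1-n}d\sigma(u)\,.
\end{equation*}
Since $Q_f\ge0$, the inner integral $\int_r^1 Q_f(tu)\,dt/t$ increases as $r\to0^+$ to $\int_0^1 Q_f(tu)\,dt/t\in[0,+\infty]$, so the integrand $\bigl(\int_r^1 Q_f(tu)\,dt/t\bigr)^{1-n}$ \emph{decreases} to $\bigl(\int_0^1 Q_f(tu)\,dt/t\bigr)^{1-n}$ (interpreted as $0$ when the inner integral diverges). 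By the monotone convergence theorem (applied to the decreasing sequence, which is dominated at, say, $r=1/2$ by an integrable function once we note the integrand is finite there — or more simply by Fatou applied to the nonnegative decreasing integrands, or by the dominated/monotone convergence theorem for decreasing sequences with an integrable majorant), we obtain
\begin{equation*}
\liminf_{r\to0^+}\M\bigl(f(\Gamma_{\A(r,1)})\bigr)\,\ge\,\int_{\mathbb S^{n-1}}\left(\int_0^1 Q_f(tu)\,\frac{dt}{t}\right)^{1-n}d\sigma(u)\,=\,I_Q\,>\,0\,.
\end{equation*}

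Consequently there is $r_0\in(0,1)$ and a constant $c>0$ with $\M(f(\Gamma_{\A(r,1)}))\ge c$ for all $r\in(0,r_0)$, whence $\mo f(\A(r,1))\le(\omega_{n-1}/c)^{1/(n-1)}<+\infty$ for all such $r$. Therefore $\lim_{r\to0^+}\mo f(\A(r,1))\ne+\infty$, and by Theorem~\ref{thm:necsufcond}, $f$ admits cavitation at the origin.

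\textbf{Main obstacle.} The only genuinely delicate point is justifying the passage to the limit under the integral sign over $\mathbb S^{n-1}$: we need that the decreasing family of nonnegative integrands $\bigl(\int_r^1 Q_f(tu)\,dt/t\bigr)^{1-n}$ has an integrable majorant (e.g.\ the value at a fixed $r_1\in(0,1)$), which in turn requires knowing $\int_{\mathbb S^{n-1}}\bigl(\int_{r_1}^1 Q_f(tu)\,dt/t\bigr)^{1-n}d\sigma(u)<\infty$; but this is exactly $\le\M(f(\Gamma_{\A(r_1,1)}))<+\infty$ by \eqref{eq:thm4} again, since the modulus of a nondegenerate ring's curve family is finite. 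So the monotone/dominated convergence step is self-contained. (One should also remark that the hypothesis $I_Q>0$ is used \emph{only} to guarantee the limiting lower bound is strictly positive; the finiteness of $\mo f(\A(r,1))$ for each individual $r$ is automatic and not the issue — the issue is uniformity.)
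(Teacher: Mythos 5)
Your proposal is correct and follows exactly the route the paper indicates: it declares Theorem~\ref{thm:suf1} to follow by ``combining Theorem~\ref{thm:necsufcond} and the modulus estimate from Theorem~\ref{thm:thm4},'' which is precisely what you do.

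One small simplification worth noting: the monotone/dominated convergence step you flag as the ``only genuinely delicate point'' is actually unnecessary. Since $Q_f\ge0$, for every fixed $u$ and every $r\in(0,1)$ one has $\int_r^1 Q_f(tu)\,dt/t\le\int_0^1 Q_f(tu)\,dt/t$, and because $s\mapsto s^{1-n}$ is decreasing on $(0,\infty]$, the integrand satisfies $\bigl(\int_r^1 Q_f(tu)\,dt/t\bigr)^{1-n}\ge\bigl(\int_0^1 Q_f(tu)\,dt/t\bigr)^{1-n}$ pointwise in $u$. Integrating over $\mathbb S^{n-1}$ immediately gives $\M\bigl(f(\Gamma_{\A(r,1)})\bigr)\ge I_Q>0$ uniformly in $r$, with no passage to the limit required, and hence $\mo f(\A(r,1))\le(\omega_{n-1}/I_Q)^{1/(n-1)}<+\infty$ for every $r$, which with Theorem~\ref{thm:necsufcond} concludes the proof directly.
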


\begin{cor}\label{cor:nec1} 
Let $f:\pcub\to\mathbb R^n$ be a locally quasiconformal embedding.
If $f$ does not admit cavitation at the origin, then $I_Q(f)=0.$
\end{cor}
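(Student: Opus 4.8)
The statement to prove is Corollary~\ref{cor:nec1}, which asserts that non-cavitation forces $I_Q(f) = 0$. The plan is to prove the contrapositive, which is precisely Theorem~\ref{thm:suf1}: if $I_Q(f) > 0$, then $f$ admits cavitation at the origin. So this corollary is nothing more than the logical contrapositive of the immediately preceding theorem, and the proof should simply invoke it.

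Let me write out the reasoning that Theorem~\ref{thm:suf1} itself would need, so the corollary's proof is transparent. First I would observe that by the monotonicity of moduli, for $0 < r < 1$ the family $\Gamma_{\mathcal A(r,1)}$ of curves joining the boundary components of $\mathcal A(r,1)$ satisfies $\M(f(\Gamma_{\mathcal A(r,1)})) \ge \M(f(\Gamma_{\mathcal A(r',1)}))$ is the wrong direction — rather, as $r \to 0^+$ the curve family grows, so its modulus increases; hence $\mo f(\mathcal A(r,1)) = [\omega_{n-1}/\M(f(\Gamma_{\mathcal A(r,1)}))]^{1/(n-1)}$ is monotone. The key point is the lower bound from Theorem~\ref{thm:thm4}:
\[
\M(f(\Gamma_{\mathcal A(r,1)})) \,\ge\, \int_{\mathbb S^{n-1}}\left(\int_r^1 Q_f(tu)\frac{dt}{t}\right)^{1-n} d\sigma(u).
\]
As $r \to 0^+$, the inner integral $\int_r^1 Q_f(tu)\,dt/t$ increases to $\int_0^1 Q_f(tu)\,dt/t$, so by monotone convergence the right-hand side converges to $I_Q(f)$. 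Since the exponent $1-n < 0$, if $\int_0^1 Q_f(tu)\,dt/t = +\infty$ for $\sigma$-a.e.\ $u$ then $I_Q(f) = 0$; the hypothesis $I_Q(f) > 0$ therefore means $\M(f(\Gamma_{\mathcal A(r,1)}))$ stays bounded below by a positive constant, so $\mo f(\mathcal A(r,1))$ stays bounded \emph{above}, i.e.\ it does not tend to $+\infty$. By Theorem~\ref{thm:necsufcond}, this is exactly the failure of the non-cavitation criterion, so $f$ admits cavitation.

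Given all that, the proof of Corollary~\ref{cor:nec1} is immediate: it is the contrapositive of Theorem~\ref{thm:suf1}. If $f$ does not admit cavitation at the origin, then $f$ cannot satisfy the hypothesis $I_Q(f) > 0$ of Theorem~\ref{thm:suf1} (otherwise Theorem~\ref{thm:suf1} would force cavitation, a contradiction), so $I_Q(f) = 0$. There is genuinely no obstacle here — the only subtlety, which belongs to the proof of Theorem~\ref{thm:suf1} rather than the corollary, is the interchange of limit and integral, handled by monotone convergence, and the observation that $\M(f(\Gamma_{\mathcal A(r,1)}))$ being bounded below translates (via the relation between modulus of a ring and modulus of its curve family) into $\mo f(\mathcal A(r,1))$ being bounded above. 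I would present the corollary's proof in one or two lines as a direct consequence of Theorem~\ref{thm:suf1}.

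\begin{proof}[Proof of Corollary~\ref{cor:nec1}]
This is the contrapositive of Theorem~\ref{thm:suf1}. Indeed, if $I_Q(f) > 0,$ then Theorem~\ref{thm:suf1} asserts that $f$ admits cavitation at the origin. Hence, if $f$ does not admit cavitation at the origin, we must have $I_Q(f) = 0.$
\end{proof}
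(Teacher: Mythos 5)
Your proof is correct and matches the paper's intent exactly: Corollary~\ref{cor:nec1} is stated without a separate proof because it is precisely the contrapositive of Theorem~\ref{thm:suf1}, which is what you observe. The extra exposition of how Theorem~\ref{thm:suf1} itself would be proved is not needed for the corollary, but it is accurate and does no harm.
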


Employing the relations (\ref{eq:Qf}) provides the following weaker form of the previous theorem.

\begin{thm}\label{thm:suf2} 
Let $f:\pcub\to\mathbb R^n$ be a locally quasiconformal embedding with
\begin{equation*}
I_K\,=\,I_K(f)\,
:=\,\int\limits_{\mathbb S^{n-1}}\left(\int_0^1 K_f(tu)^{1/(n-1)}\frac{dt}{t}\right)^{1-n}d\sigma(u)\,>\,0\,.
\end{equation*}
Then $f$ admits cavitation at the origin.
\end{thm}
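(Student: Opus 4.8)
The plan is to derive Theorem~\ref{thm:suf2} directly from Theorem~\ref{thm:suf1} via the chain of inequalities~\eqref{eq:Qf}. The key observation is that \eqref{eq:Qf} gives the pointwise bound $Q_f(x)\ge 1/L_f(x)^{1/(n-1)}$ and, more usefully here, $Q_f(x)\ge K_f(x)^{1/(1-n)}=1/K_f(x)^{1/(n-1)}$ at every regular point $x$ of $f$ (the relation $1/K_f(x)\le Q_f(x)$ combined with $K_f\ge 1$, or more directly the inequality $1/D_f(x)^{1/(n-1)}\le Q_f(x)$ together with $D_f(x)\le K_f(x)$, yields $Q_f(x)\ge 1/K_f(x)^{1/(n-1)}$). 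Actually, the cleanest route is to use $D_f(x,x_0)\le K_f(x)$, which follows since $\ell_f(x,x_0)\ge l_f(x)$ and hence $D_f(x,x_0)=J_f(x)/\ell_f(x,x_0)^n\le J_f(x)/l_f(x)^n = L_f(x)$; wait, that gives $L_f$, not $K_f$. The inequality we want is precisely the leftmost part of \eqref{eq:Qf}: $1/K_f(x)\le 1/L_f(x)^{1/(n-1)}\le 1/D_f(x,x_0)^{1/(n-1)}\le Q_f(x,x_0)$, so in particular $Q_f(x)\ge 1/K_f(x)$. But we need $Q_f(x)\ge K_f(x)^{1/(1-n)}$. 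Since $K_f\ge 1$ we have $K_f^{1/(n-1)}\le K_f$, hence $1/K_f(x)^{1/(n-1)}\ge 1/K_f(x)$, so this does \emph{not} immediately follow.

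The correct reading is that the quantity appearing in Theorem~\ref{thm:suf2} uses $K_f(tu)^{1/(n-1)}$ in the \emph{inner} integral, whereas Theorem~\ref{thm:suf1} uses $Q_f(tu)$. So I should compare $\int_0^1 Q_f(tu)\,dt/t$ with $\int_0^1 K_f(tu)^{1/(n-1)}\,dt/t$. From \eqref{eq:Qf} we read off $Q_f(x)\le K_f(x)^{1/(n-1)}$ at every regular point. Therefore, for every $u\in\mathbb S^{n-1}$,
\begin{equation*}
\int_0^1 Q_f(tu)\frac{dt}{t}\,\le\,\int_0^1 K_f(tu)^{1/(n-1)}\frac{dt}{t}\,,
\end{equation*}
so that, raising to the power $1-n<0$ reverses the inequality pointwise in $u$:
\begin{equation*}
\left(\int_0^1 Q_f(tu)\frac{dt}{t}\right)^{1-n}\,\ge\,\left(\int_0^1 K_f(tu)^{1/(n-1)}\frac{dt}{t}\right)^{1-n}\,.
\end{equation*}
Integrating over $\mathbb S^{n-1}$ against $d\sigma(u)$ gives $I_Q(f)\ge I_K(f)$.

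With this in hand the proof is immediate: assume $I_K(f)>0$. Then $I_Q(f)\ge I_K(f)>0$, so the hypothesis of Theorem~\ref{thm:suf1} is satisfied, and Theorem~\ref{thm:suf1} yields that $f$ admits cavitation at the origin. One small point to address is measurability and the validity of \eqref{eq:Qf} almost everywhere: since $f$ is a locally quasiconformal embedding, $f\in W^{1,n}_{\loc}$ and is differentiable with $J_f>0$ a.e., so the pointwise inequality $Q_f\le K_f^{1/(n-1)}$ holds at a.e.\ point $tu$, and $Q_f$, $K_f$ are Borel measurable on the set of regular points; the inner integrals are therefore well-defined (possibly $+\infty$, in which case the corresponding term contributes $0$ to both $I_Q$ and $I_K$, consistently with the inequality $I_Q\ge I_K$).

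There is no real obstacle here; the only thing to be careful about is the direction of the inequality after raising to the negative power $1-n$, and the bookkeeping that the comparison $Q_f\le K_f^{1/(n-1)}$ is exactly the penultimate link in \eqref{eq:Qf}, which the paper has already established. Thus the entire argument is: (i) invoke $Q_f\le K_f^{1/(n-1)}$ a.e.\ from \eqref{eq:Qf}; (ii) integrate in $t$ to compare the inner integrals; (iii) raise to the power $1-n$, flipping the inequality; (iv) integrate over $\mathbb{S}^{n-1}$ to get $I_Q(f)\ge I_K(f)$; (v) conclude via Theorem~\ref{thm:suf1}.
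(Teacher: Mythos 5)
Your proof is correct and is precisely the argument the paper intends: the paper just says ``Employing the relations~\eqref{eq:Qf} provides the following weaker form of the previous theorem,'' and the content of that remark is exactly your steps (i)--(v), namely $Q_f(x,0)\le K_f(x)^{1/(n-1)}$ a.e., hence $\int_0^1 Q_f(tu)\,dt/t\le\int_0^1 K_f(tu)^{1/(n-1)}\,dt/t$, hence (raising to the negative power $1-n$ and integrating over $\mathbb S^{n-1}$) $I_Q(f)\ge I_K(f)$, so $I_K>0$ implies $I_Q>0$ and Theorem~\ref{thm:suf1} applies. The only cosmetic issue is that the final write-up should drop the several false starts at the beginning; the cleaned-up version beginning with ``The correct reading is\ldots'' is the proof.
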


\begin{cor}\label{cor:nec2} 
Let $f:\pcub\to\mathbb R^n$ be a locally quasiconformal embedding.
If $f$ does not admit cavitation at the origin, then $I_K(f)=0.$
\end{cor}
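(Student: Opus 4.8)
The plan is to observe that Corollary~\ref{cor:nec2} is nothing but the logical contrapositive of Theorem~\ref{thm:suf2}. Suppose $f:\pcub\to\mathbb R^n$ is a locally quasiconformal embedding that does not admit cavitation at the origin. If we had $I_K(f)>0$, then the hypothesis of Theorem~\ref{thm:suf2} would be satisfied and that theorem would force $f$ to admit cavitation at the origin, contradicting our assumption. Since $I_K(f)\ge 0$ always (the integrand is nonnegative), we conclude $I_K(f)=0$. That is the entire argument; there is no substantive obstacle.

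For the reader's convenience I recall the ingredients that make Theorem~\ref{thm:suf2} available. First, the chain \eqref{eq:Qf}, evaluated with $x_0=0$, gives the pointwise bound $Q_f(x)\le K_f(x)^{1/(n-1)}$ at every regular point $x$ of $f$; integrating along a radial ray and using positivity of both quantities yields, for a.e.\ $u\in\mathbb S^{n-1}$,
\begin{equation*}
\int_0^1 Q_f(tu)\,\frac{dt}{t}\,\le\,\int_0^1 K_f(tu)^{1/(n-1)}\,\frac{dt}{t},
\end{equation*}
where both sides are permitted to equal $+\infty$. Since the exponent $1-n$ is negative, the map $s\mapsto s^{1-n}$ reverses inequalities on $(0,+\infty]$, so
\begin{equation*}
\left(\int_0^1 Q_f(tu)\,\frac{dt}{t}\right)^{1-n}\,\ge\,\left(\int_0^1 K_f(tu)^{1/(n-1)}\,\frac{dt}{t}\right)^{1-n},
\end{equation*}
and integration over $\mathbb S^{n-1}$ gives $I_Q(f)\ge I_K(f)$. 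Hence $I_K(f)>0$ implies $I_Q(f)>0$, and Theorem~\ref{thm:suf1} then produces cavitation at the origin; this is exactly Theorem~\ref{thm:suf2}, and Corollary~\ref{cor:nec2} is its contrapositive.

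The only point requiring any care is the reversal of the inequality direction caused by the negative exponent $1-n$ (together with the harmless bookkeeping that the radial integrals may be infinite, in which case the corresponding $(1-n)$-th powers are zero and the displayed inequalities hold trivially). Everything else is immediate from Theorems~\ref{thm:suf1} and \ref{thm:suf2} and the relations \eqref{eq:Qf} quoted above.
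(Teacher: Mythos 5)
Your proof is correct and matches the paper's intent: the paper states Corollary~\ref{cor:nec2} without proof as the obvious contrapositive of Theorem~\ref{thm:suf2}, and your argument (including the recollection that Theorem~\ref{thm:suf2} follows from Theorem~\ref{thm:suf1} via $Q_f\le K_f^{1/(n-1)}$ from \eqref{eq:Qf}, with the exponent $1-n<0$ reversing the resulting inequality so that $I_Q\ge I_K$) is exactly the intended reasoning.
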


\medskip
We now establish sufficient conditions for a mapping to have a continuous extension to the origin (non-cavitation).

\begin{thm}\label{thm:suf3} 
Let $f:\pcub\to\mathbb R^n$ be a locally quasiconformal embedding with
\begin{equation*}
I_D\,=\,I_D(f)\,
:=\,\int\limits_0^1\left(\int_{\mathbb S^{n-1}} D_f(tu)t^{n-1}d\sigma(u)\right)^{1/(1-n)}dt\,=\,+\infty\,.
\end{equation*}
Then $f$ does not admit cavitation at the origin.
\end{thm}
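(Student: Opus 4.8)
The plan is to obtain Theorem~\ref{thm:suf3} by inserting the upper modulus bound of Theorem~\ref{thm:thm4} into the non-cavitation criterion of Theorem~\ref{thm:necsufcond}. Recall that the module of a ring and the modulus of its connecting curve family are linked by $\mo\ring=[\omega_{n-1}/\M(\Gamma_\ring)]^{1/(n-1)}$; since the exponent $1/(n-1)$ is positive and $\M$ sits in the denominator, an \emph{upper} bound on $\M(f(\Gamma_{\A(r,1)}))$ is the same as a \emph{lower} bound on $\mo f(\A(r,1))$. By Theorem~\ref{thm:necsufcond}, $f$ fails to admit cavitation at the origin exactly when $\mo f(\A(r,1))\to+\infty$ as $r\to0^+$, so it is enough to show that this lower bound diverges.

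Write $\varphi(t)=\int_{\mathbb S^{n-1}}D_f(tu)\,t^{n-1}\,d\sigma(u)$ and $I(r)=\int_r^1\varphi(t)^{1/(1-n)}\,dt$. Here $\varphi$ is a.e.\ finite and positive on $(0,1)$ --- this follows from the local essential boundedness of $\log D_f$ recorded just after \eqref{eq:Qf} --- so $I(r)$ is a well-defined element of $(0,+\infty]$, and moreover $I(r)\uparrow I_D(f)$ as $r\downarrow0$ by the monotone convergence theorem, the integrand $\varphi^{1/(1-n)}$ being nonnegative. The right-hand inequality in \eqref{eq:thm4} is precisely $\M(f(\Gamma_{\A(r,1)}))\le I(r)^{1-n}$, and since $1-n<0$ it gives
$$
\mo f(\A(r,1))\,=\,\left[\frac{\omega_{n-1}}{\M(f(\Gamma_{\A(r,1)}))}\right]^{1/(n-1)}
\,\ge\,\left[\frac{\omega_{n-1}}{I(r)^{1-n}}\right]^{1/(n-1)}\,=\,\omega_{n-1}^{1/(n-1)}\,I(r)\,.
$$
Letting $r\to0^+$ and invoking the hypothesis $I_D(f)=+\infty$ yields $\mo f(\A(r,1))\to+\infty$, and Theorem~\ref{thm:necsufcond} then gives that $f$ does not admit cavitation at the origin.

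I do not anticipate a genuine obstacle here: the substance is already carried by Theorems~\ref{thm:thm4} and \ref{thm:necsufcond}, and what remains is essentially bookkeeping. The two points that need a little care are the sign of the exponent $1-n$ when passing between $\M$ and $\mo$ (it is precisely this sign reversal that turns the upper modulus bound into the lower module bound we want), and checking that $\varphi$, hence $I(r)$, is legitimately defined so that the passage to the limit $r\to0^+$ is valid. If one prefers to sidestep any measurability remark entirely, it suffices to observe that the estimate $\mo f(\A(r,1))\ge\omega_{n-1}^{1/(n-1)}\int_r^1\varphi(t)^{1/(1-n)}\,dt$ holds for every $r\in(0,1)$, that its right-hand side is nondecreasing as $r\downarrow0$, and that its supremum equals $\omega_{n-1}^{1/(n-1)}\,I_D(f)=+\infty$; hence $\mo f(\A(r,1))$ diverges as $r\to0^+$, which is what Theorem~\ref{thm:necsufcond} requires.
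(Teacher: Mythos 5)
Your proof is correct and is exactly the argument the paper has in mind: it states just before Theorem~\ref{thm:suf1} that these sufficient conditions are obtained by "combining Theorem~\ref{thm:necsufcond} and the modulus estimate from Theorem~\ref{thm:thm4}," and that is precisely what you carry out, with the sign-reversal bookkeeping (upper bound on $\M$ becoming lower bound on $\mo$) and the monotone-convergence passage to the limit spelled out cleanly.
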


\begin{cor}\label{cor:nec3} 
Let $f:\pcub\to\mathbb R^n$ be a locally quasiconformal embedding.
If $f$ admits cavitation at the origin, then $I_D(f)<+\infty.$
\end{cor}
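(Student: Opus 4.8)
The plan is to derive this as a contrapositive reformulation of Theorem~\ref{thm:suf3}, so the only real content is to verify that the hypotheses match. Suppose $f:\pcub\to\mathbb R^n$ is a locally quasiconformal embedding that admits cavitation at the origin, and suppose for contradiction that $I_D(f)=+\infty$. Then Theorem~\ref{thm:suf3} applies verbatim and yields that $f$ does \emph{not} admit cavitation at the origin, contradicting our assumption. Hence $I_D(f)<+\infty$, which is exactly the assertion of Corollary~\ref{cor:nec3}.

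To make this airtight I would spell out the one point that needs care: the definition of $I_D(f)$ involves the improper integral of the nonnegative quantity $\varphi(t)^{1/(1-n)}$, where $\varphi(t)=\int_{\mathbb S^{n-1}}D_f(tu)t^{n-1}\,d\sigma(u)$, over $(0,1)$. Since the integrand is nonnegative and measurable (recall $D_f(x)$ is Borel measurable on the regular set of $f$, and $\log D_f$ is locally essentially bounded for a locally quasiconformal map, so $\varphi(t)$ is finite and positive for a.e.\ $t$), the integral is well-defined in $[0,+\infty]$, and the statement $I_D(f)<+\infty$ is simply the negation of the hypothesis $I_D(f)=+\infty$ of Theorem~\ref{thm:suf3}. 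There is no measure-zero subtlety or borderline case: the dichotomy $I_D(f)=+\infty$ versus $I_D(f)<+\infty$ is exhaustive.

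I do not anticipate a genuine obstacle here, since the corollary is logically equivalent to the theorem it follows. The only thing to watch is that the statement of Theorem~\ref{thm:suf3} is an implication ``$I_D(f)=+\infty \Rightarrow$ non-cavitation'' and not a biconditional, so the contrapositive gives only ``cavitation $\Rightarrow I_D(f)<+\infty$'' — which is precisely what is claimed, not more. One could phrase the whole proof in a single sentence, but I would present it as above for clarity.

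\begin{proof}
This is the contrapositive of Theorem~\ref{thm:suf3}. Indeed, suppose $f$ admits cavitation at the origin. If we had $I_D(f)=+\infty$, then Theorem~\ref{thm:suf3} would imply that $f$ does not admit cavitation at the origin, a contradiction. Therefore $I_D(f)<+\infty$.
\end{proof}
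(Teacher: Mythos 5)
Your proof is correct and matches what the paper intends: Corollary~\ref{cor:nec3} is simply the contrapositive of Theorem~\ref{thm:suf3}, and the paper states it without further argument for exactly this reason. Your remark that the dichotomy $I_D(f)=+\infty$ versus $I_D(f)<+\infty$ is exhaustive, and that only the one-directional implication is claimed, is the right sanity check, and nothing more is needed.
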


Finally, by replacing the angular directional dilatation with the inner one, we obtain the following result:

\begin{thm}\label{thm:suf4} 
Let $f:\pcub\to\mathbb R^n$ be a locally quasiconformal embedding with
\begin{equation*}
I_L\,=\,I_L(f)\,
:=\,\int\limits_0^1\left(\int_{\mathbb S^{n-1}} L_f(tu)t^{n-1}d\sigma(u)\right)^{1/(1-n)}dt\,=\,+\infty\,.
\end{equation*}
Then $f$ does not admit cavitation at the origin.
\end{thm}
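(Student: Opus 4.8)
The plan is to deduce Theorem~\ref{thm:suf4} from Theorem~\ref{thm:suf3} by comparing the two integrals $I_L(f)$ and $I_D(f)$, using the relation between the inner dilatation and the angular dilatation already recorded in the excerpt. Recall from the chain \eqref{eq:Qf} that at every regular point $x$ of $f$ one has $D_f(x)\le L_f(x)$, since \eqref{eq:Qf} gives $1/L_f(x)^{1/(n-1)}\le 1/D_f(x)^{1/(n-1)}$, i.e. $D_f(x)\le L_f(x)$; equivalently, writing $x=tu$ with $t=|x|$ and $u\in\mathbb S^{n-1}$, $D_f(tu)\le L_f(tu)$ wherever the dilatations are defined. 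This pointwise bound is the entire mechanism; everything else is monotonicity of the integrals.

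First I would note that $D_f(tu)\le L_f(tu)$ implies, for each fixed $t\in(0,1)$ for which the inner integral makes sense,
\begin{equation*}
\int_{\mathbb S^{n-1}} D_f(tu)\,t^{n-1}\,d\sigma(u)\,\le\,\int_{\mathbb S^{n-1}} L_f(tu)\,t^{n-1}\,d\sigma(u)\,.
\end{equation*}
Since the exponent $1/(1-n)$ is negative (as $n\ge 2$), raising to this power reverses the inequality:
\begin{equation*}
\left(\int_{\mathbb S^{n-1}} D_f(tu)\,t^{n-1}\,d\sigma(u)\right)^{1/(1-n)}\,\ge\,\left(\int_{\mathbb S^{n-1}} L_f(tu)\,t^{n-1}\,d\sigma(u)\right)^{1/(1-n)}\,.
\end{equation*}
Integrating in $t$ over $(0,1)$ then yields $I_D(f)\ge I_L(f)$. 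Hence the hypothesis $I_L(f)=+\infty$ forces $I_D(f)=+\infty$, and Theorem~\ref{thm:suf3} applies to conclude that $f$ does not admit cavitation at the origin.

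The only points requiring a modest amount of care are measurability and the behaviour of the inner integral. The function $L_f(tu)=J_f(tu)/l_f(tu)^n$ is Borel measurable on the set of regular points, exactly as noted for $D_f$ and $Q_f$ in the excerpt, and by local quasiconformality $L_f$ is locally essentially bounded away from the origin; the set of non-regular points has $\sigma$-measure zero on $\sigma$-almost every sphere $|x|=t$ (this is the same Fuglede/ACL argument used in the proof of Theorem~\ref{thm:main}), so the inner spherical integrals are well defined for a.e.\ $t\in(0,1)$ and the comparison above holds for a.e.\ such $t$. I would also flag the degenerate possibility that the inner integral vanishes on a positive-measure set of $t$, in which case $(\,\cdot\,)^{1/(1-n)}=+\infty$ there and $I_D(f)=+\infty$ trivially, so the conclusion still holds; and the possibility that $\int_{\mathbb S^{n-1}} D_f(tu)t^{n-1}d\sigma(u)=+\infty$, which only makes the left side larger and so is harmless. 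There is no genuine obstacle here: the statement is a straightforward corollary of Theorem~\ref{thm:suf3} together with the inequality $D_f\le L_f$ from \eqref{eq:Qf}, and the proof is essentially a one-line monotonicity argument, the ``hard part'' being merely to record that the negative exponent flips the inequalities in the right direction.
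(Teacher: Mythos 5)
Your proof is correct and follows essentially the same route the paper intends: the chain \eqref{eq:Qf} gives $D_f(x)\le L_f(x)$ pointwise, the negative exponent $1/(1-n)$ reverses the spherical-integral inequality so that $I_D(f)\ge I_L(f)$, and then Theorem~\ref{thm:suf3} closes the argument. The paper organizes the same material slightly differently (passing through Corollary~\ref{cor:cor1} and rerunning the modulus-blowup argument with $L_f$ in place of $D_f$), but the mechanism is identical and your reduction to Theorem~\ref{thm:suf3} is the cleaner phrasing.
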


\begin{cor}\label{cor:nec4} 
Let $f:\pcub\to\mathbb R^n$ be a locally quasiconformal embedding.
If $f$ admits cavitation at the origin, then $I_L(f)<+\infty.$
\end{cor}


\section{Illustrating examples}\label{sec4}

We now present several illustrative examples, including the ones mentioned in the Introduction, to examine the results established in the previous section.

\medskip
Applying Lemma~\ref{lem:rs} to $f_1=\frac{1+|x|^\alpha}{2|x|}x$ with $0<\alpha<1,$ we have
$\mathcal Q=\alpha/(1+t^{-\alpha})<1$ and
\begin{equation*}
L_{f_1}(tu)\,=\,\left(\frac{1+t^{-\alpha}}{\alpha}\right)^{n-1}\,,\quad K_{f_1}(tu)\,=\,\frac{1+t^{-\alpha}}{\alpha}\,, \quad J_{f_1}(tu)\,=\,\frac{\alpha(1+t^\alpha)^{n-1}}{2^nt^{n-\alpha}}\,,
\end{equation*}
and
\begin{equation*}
D_{f_1}(x)\,=\,\left(\frac{1+t^{-\alpha}}{\alpha}\right)^{n-1}\,,\quad Q_{f_1}(x)\,=\,\frac{\alpha}{1+t^{-\alpha}}\,.
\end{equation*}

Thus, in view of Theorem~\ref{thm:suf1}, a calculation of $I_Q$ yields:
\begin{equation*}
\begin{split}
I_Q\,&=\,\int\limits_{\mathbb S^{n-1}}\left(\int_0^1 Q_f(tu)\frac{dt}{t}\right)^{1-n}d\sigma(u)\\
&=\,\int\limits_{\mathbb S^{n-1}}\left(\int_0^1 \frac{\alpha}{1+t^{-\alpha}}\frac{dt}{t}\right)^{1-n}d\sigma(u)\\
&=\,\omega_{n-1}\left(\int\limits_0^1\frac{\alpha t^{\alpha-1}dt}{t^\alpha+1}\right)^{1-n}\,
=\, \omega_{n-1}(\log 2)^{1-n} >\,0\,,
\end{split}
\end{equation*}
and cavitation occurs at the origin.

Importantly, the sufficient condition provided by Theorem~\ref{thm:suf2}, which is based on the classical dilatation $K_f,$ is not satisfied, as the integral $I_K$ evaluates to zero:
\begin{equation*}
\begin{split}
I_K\,&=\,\int\limits_{\mathbb S^{n-1}}\left(\int_0^1 K_f(tu)^{1/(n-1)}\frac{dt}{t}\right)^{1-n}d\sigma(u)\\
&=\,\int\limits_{\mathbb S^{n-1}}\left(\int_0^1 \left(\frac{1+t^{-\alpha}}{\alpha}\right)^{1/(1-n)}\frac{dt}{t}\right)^{n-1}d\sigma(u)
\\&=\,\alpha\omega_{n-1}\left(\int\limits_0^1\frac{(1+t^\alpha)^{1/(n-1)}}{t^{1+\alpha/(n-1)}}dt\right)^{1-n}\,
=\,0\,.
\end{split}
\end{equation*}

The result $I_D < \infty$ and $I_L < \infty$ confirms consistency with the necessary conditions for cavitation (Corollaries~\ref{cor:nec3} and \ref{cor:nec4}), since both $I_D$ and $I_L$ are finite:
\begin{equation*}
\begin{split}
I_D\,=\,I_L\,&=\,\int\limits_0^1\left(\int\limits_{\, \mathbb S^{n-1}}D_{f_1}(tu)t^{n-1}d\sigma(u)\right)^{1/(1-n)}dt
\\&=\,\int\limits_0^1\left(\int\limits_{\, \mathbb S^{n-1}}\frac{(1+t^{-\alpha})^{n-1}}{\alpha^{n-1}}t^{n-1}d\sigma(u)\right)^{1/(1-n)}dt
\\&=\,\alpha\omega_{n-1}^{1/(1-n)}\int\limits_0^1\frac{dt}{t^{1-\alpha}(1+t^\alpha)}\,<\,+\infty\,.
\end{split}
\end{equation*}

\medskip
The stretching $f_2=x e^{1-1/|x|}$ has no cavitation at the origin. For this mapping, Lemma~\ref{lem:rs} yields
\begin{equation*}
L_{f_2}(x)\,=\,1+\frac{1}{t}\,,\quad K_{f_2}(x)\,=\,\left(1+\frac{1}{t}\right)^{n-1}\,,\quad
J_{f_2}(x)\,=\,e^{n(1-1/t)}\left(1+\frac{1}{t}\right)\,,
\end{equation*}
and
\begin{equation*}
D_{f_2}(x)\,=\,\left(1+\frac{1}{t}\right)^{1-n}\,,\qquad Q_{f_2}(x)\,=\,1+\frac{1}{t}\,.
\end{equation*}

Consequently, for $f_2,$ the calculation for $I_D$ is:
\begin{equation*}
\begin{split}
I_D\,&=\,\int\limits_0^1\left(\int\limits_{\,\mathbb S^{n-1}}D_f(tu)t^{n-1}d\sigma(u)\right)^{1/(1-n)}dt
\\&=\,\int\limits_0^1\left(\int\limits_{\,\mathbb S^{n-1}}\left(1+\frac{1}{t}\right)^{1-n}t^{n-1}d\sigma(u)\right)^{1/(1-n)}dt
\\&=\,\omega_{n-1}^{1/(1-n)}\int\limits_0^1\frac{(1+t)}{t^2}dt\,=\,+\infty\,,
\end{split}
\end{equation*}
and, by Theorem~\ref{thm:suf3}, $f_2$ has no cavitation at the origin.

In contrast, the sufficient condition involving the inner dilatation (Theorem~\ref{thm:suf4}) is inconclusive, as the integral $I_L$ is finite:
\begin{equation*}
\begin{split}
I_L\,&=\,\int\limits_0^1\left(\int\limits_{\, \mathbb S^{n-1}}L_f(tu)t^{n-1}d\sigma(u)\right)^{1/(1-n)}dt
\\&=\,\int\limits_0^1\left(\int\limits_{\, \mathbb S^{n-1}}\left(1+\frac{1}{t}\right)t^{n-1}d\sigma(u)\right)^{1/(1-n)}dt
\\&=\,\omega_{n-1}^{1/(1-n)}\int\limits_0^1(1+t)^{1/(1-n)}t^{-(n-2)/(n-1)}dt\,<\,+\infty\,.
\end{split}
\end{equation*}
The necessary conditions for non-cavitation (Corollaries~\ref{cor:nec1} and \ref{cor:nec2}) are also confirmed: the integrals $I_Q$ and $I_K$ both vanish, which confirms that the mapping does not satisfy the sufficient conditions for cavitation:
\begin{equation*}
\begin{split}
I_Q\,=\,I_K\,&=\,\int\limits_{\mathbb S^{n-1}}\left(\int_0^1 Q_f(tu)\frac{dt}{t}\right)^{1-n}d\sigma(u)
\\&=\,\int\limits_{\mathbb S^{n-1}}\left(\int_0^1\left(1+\frac{1}{t}\right)\frac{dt}{t}\right)^{1-n}d\sigma(u)
\\&=\,\omega_{n-1}\left(\int\limits_0^1\frac{(1+t)}{t^2}dt\right)^{1-n}\,=\,0\,.
\end{split}
\end{equation*}

\medskip
Finally, we analyze the rotation $f_3$ by calculating the integrals $I_Q,$ $I_D,$ $I_L,$ and $I_K$ and drawing conclusions regarding the sufficient and necessary conditions presented in Section~\ref{sec3}.
Recall that for the Lipschitz mapping $f_3,$
\begin{equation*}
L_{f_3}(x)\,=\,K_{f_3}(x)\,=\,\left(\sqrt{2}+1\right)^n\,,\qquad
J_{f_3}(x)\,=\,1\,,
\end{equation*}
see, e.g., \cite{GG09}.
The directional dilatations are computed as follows:
\begin{equation*}
D_{f_3}(x)\,=\,1\,,\qquad Q_{f_3}(x)\,=\,\left(1+4|z|^2\right)^{n/2(n-1)}\,.
\end{equation*}

Then, computations of  the four integrals are exhibited as
\begin{equation*}
\begin{split}
I_Q\,&=\,\int\limits_{\mathbb S^{n-1}}\left(\int_0^1 Q_f(tu)\frac{dt}{t}\right)^{n-1}d\sigma(u)\,\\&=\,
\int\limits_{\mathbb S^{n-1}}\left(\int_0^1 \left(1+4t^2(u_1^2+u_2^2)\right)^{n/2(n-1)}\!\frac{dt}{t}\right)^{1-n}d\sigma(u)\,=\,0\,,
\end{split}
\end{equation*}
\begin{equation*}
\begin{split}
I_K\,&=\,\int\limits_{\mathbb S^{n-1}}\left(\int_0^1 K_f(tu)^{1/(n-1)}\frac{dt}{t}\right)^{1-n}d\sigma(u)\,\\&=\,
\int\limits_{\mathbb S^{n-1}}\left(\int_0^1 \left(\sqrt{2}+1\right)^{n/(n-1)}\!\frac{dt}{t}\right)^{1-n}d\sigma(u)\,=\,0\,,
\end{split}
\end{equation*}
\begin{equation*}
I_D\,=\,\int\limits_0^1\left(\int\limits_{\, \mathbb S^{n-1}}D_f(tu)t^{n-1}d\sigma(u)\right)^{1/(1-n)}\!dt\,=\,
\omega_{n-1}^{1/(1-n)}\int_0^1 \frac{dt}{t}\,=\,+\infty\,,
\end{equation*}
\begin{equation*}
I_L\,=\,\int\limits_0^1\left(\int\limits_{\, \mathbb S^{n-1}}L_f(tu)t^{n-1}d\sigma(u)\right)^{1/(1-n)}\!dt\,=\,
\omega_{n-1}^{1/(1-n)}\left(\sqrt{2}-1\right)^{n/(n-1)}\int_0^1 \frac{dt}{t}\,=\,+\infty\,.
\end{equation*}

This example confirms that for the Lipschitz mapping $f_3,$ both the directional and classical dilatation-based conditions successfully predict absence of cavitation.


\section{Applications of modulus estimates}\label{sec5}

In this section, we present some known results which can be easily derived from Theorem~\ref{thm:main}.

\medskip
If we set $p(u)=1/\omega_{n-1}^{1/(n-1)},$ which satisfies the second equality in (\ref{eq:admcons}), then the lower bound from from Theorem~\ref{thm:main} yields
\begin{equation*}
{\M}(f(\Gamma))\ge \omega_{n-1}^{-n}\left(\int_{\mathcal A(r,R)}Q_f(x)\frac{dm_n(x)}{|x|^n}\right)^{1-n}.
\end{equation*}

Recall that
\begin{equation*}
\mo f(\mathcal A(r,R))\,=\,\left[\frac{\omega_{n-1}}{\M(f(\Gamma))}\right]^{1/(n-1)}\,,
\end{equation*}
where $\Gamma$ is the family of curves joining the boundary
components in $\mathcal A(r,R),$ and $\omega_{n-1}$ is the
$(n-1)$-dimensional surface area of the unit sphere $S^{n-1}$ in
${\mathbb R}^n,$ and also that
\begin{equation*}
Q_f(x)\,\le\, K_f(x)^{1/(n-1)}\,\le\, L_f(x)\,.
\end{equation*}
Then we arrive at the inequality
\begin{equation*}
\mo f(\mathcal A(r,R))\,\le\, \frac{1}{\omega_{n-1}}\int\limits_{\mathcal A(r,R)}\, \frac{L_f(x)}{|x|^n}\,dm_n(x)\,,
\end{equation*}
or
\begin{equation*}
\mo f(\mathcal A(r,R))-\mo \mathcal A(r,R)\,\le\, \frac{1}{\omega_{n-1}}\int\limits_{\mathcal A(r,R)}\, \frac{L_f(x)-1}{|x|^n}\,dm_n(x)\,.
\end{equation*}
Because for $K$-quasiconformal mappings $L_f(x)\le K,$ we come to the fundamental inequality
\begin{equation*}
\mo f(\mathcal A(r,R))\,\le\, K\mo \mathcal A(r,R)\,.
\end{equation*}

\medskip

In the paper \cite{BGMV03} (see Corollary~2.8), the authors have proven the inequality
\begin{equation}\label{t3.3}
\log\frac{m_f(R)}{M_f(r)}-\log\frac{R}{r}\,\le\, \frac{1}{\omega_{n-1}}\int\limits_{\mathcal A(r,R)}\, \frac{L_f(x)-1}{|x|^n}\,dm_n(x)\,,
\end{equation}
where
\begin{equation*}
M_f(t)\,=\,\max_{|x|=t}\,|f(x)|\,,\qquad m_f(t)\,=\,\min_{|x|=t}\,|f(x)|\,.
\end{equation*}

The proof of (\ref{t3.3}) in \cite{BGMV03} relies on a fundamental result of Gehring \cite{Geh62} (see, also, \cite[p.~27]{Vai71},
\cite[p.~58]{Car74}, \cite[p.~108]{Resh89}),
the proof of which, in turn, involves a combination
of some space modulus technique with Hardy--Littlewood--Polya's symmetrization
principle.

Now, it is easy to see that the above estimate (\ref{t3.3}) is a simple consequence of the lower bound for $\M(f(\Gamma))$ given by (\ref{eq:maindoubest}). Indeed, if $m_f(R)\le M_f(r),$ then the inequality is trivial. If $m_f(R)> M_f(r),$ then
$f(\mathcal A(r,R))\supset \mathcal A(M_f(r),m_f(R)),$ and it implies that
\begin{equation*}
\log\frac{m_f(R)}{M_f(r)}\,\le\, \mo f(\mathcal A(r,R))\,\le\, \frac{1}{\omega_{n-1}}\int\limits_{\mathcal A(r,R)}\, \frac{L_f(x)}{|x|^n}\,dm_n(x)\,,
\end{equation*}
and the required inequality follows.


\bigskip
\noindent
{\bf Declarations.}

Conflict of Interest: None.

Ethical Approval: Not applicable.

Data Availability: No datasets were generated or analyzed during this study.

\bigskip
\noindent
{\bf Acknowledgements.}
The second and the third authors were partially supported by the grant from the Simons Foundation (SFI-PDUkraine-00017674, V.~Gutlyanski\u\i\ and V.~Ryazanov). They were also supported by the grant ``Mathematical modelling of complex systems and processes related to security'' of the National Academy of Sciences of Ukraine under the budget programme ``Support for the development of priority areas of scientific research'' for 2025–2026 (p/n 0125U000299).


\end{document}